\theoremstyle{plain}
\newtheorem{thm}{Theorem}[section]
\newtheorem*{thm*}{Theorem}
\newtheorem*{cor*}{Corollary}
\newtheorem{prop}[thm]{Proposition}
\newtheorem{lem}[thm]{Lemma}
\newtheorem*{claim*}{Claim}
\theoremstyle{definition}
\newtheorem{defn}[thm]{Definition}
\newtheorem{ques}[thm]{Question}
\newtheorem{remark}[thm]{Remark}
\theoremstyle{remark}
\numberwithin{equation}{thm}
\def\Ext{\operatorname{Ext}}
\def\Ker{\operatorname{Ker}}
\def\mod{\mathrm{mod}}
\def\a{\mathrm a}
\def\e{\mathrm{e}}
\def\m{\mathfrak m}
\def\n{\mathfrak n}
\def\q{\mathfrak q}
\newcommand{\rma}{\mathrm{a}}
\newcommand{\rmo}{\mathrm{o}}
\newcommand{\rmr}{\mathrm{r}}
\newcommand{\rmK}{\mathrm{K}}
\newcommand{\calR}{\mathcal{R}}
\newcommand{\fka}{\mathfrak{a}}
\newcommand{\fkm}{\mathfrak{m}}
\newcommand{\fkn}{\mathfrak{n}}
\newcommand{\fkM}{\mathfrak{M}}
\newcommand{\mapright}[1]{%
\smash{\mathop{%
\hbox to 1cm{\rightarrowfill}}\limits^{#1}}}
\newcommand{\mapleft}[1]{%
\smash{\mathop{%
\hbox to 1cm{\leftarrowfill}}\limits_{#1}}}
\begin{document}

\setlength{\baselineskip}{17pt}
\title[Contracted ideals]{On the almost Gorenstein property in the Rees algebras of contracted ideals}
\author{Shiro Goto}
\address{Department of Mathematics, School of Science and Technology, Meiji University, 1-1-1 Higashi-mita, Tama-ku, Kawasaki 214-8571, Japan}
\email{shirogoto@gmail.com}
\author{Naoyuki Matsuoka}
\address{Department of Mathematics, School of Science and Technology, Meiji University, 1-1-1 Higashi-mita, Tama-ku, Kawasaki 214-8571, Japan}
\email{naomatsu@meiji.ac.jp}
\author{Naoki Taniguchi}
\address{Department of Mathematics, School of Science and Technology, Meiji University, 1-1-1 Higashi-mita, Tama-ku, Kawasaki 214-8571, Japan}
\email{taniguchi@meiji.ac.jp}
\author{Ken-ichi Yoshida}
\address{Department of Mathematics, College of Humanities and Sciences, Nihon University, 3-25-40 Sakurajosui, Setagaya-Ku, Tokyo 156-8550, Japan}
\email{yoshida@math.chs.nihon-u.ac.jp}

\thanks{2010 {\em Mathematics Subject Classification.} 13H10, 13H15, 13A30}
\thanks{{\em Key words and phrases.} almost Gorenstein local ring, almost Gorenstein graded ring, Rees algebra, contracted ideal}
\thanks{The first author was partially supported by JSPS Grant-in-Aid for Scientific Research 25400051. The second author was partially supported by JSPS Grant-in-Aid for Scientific Research 26400054. The fourth author was partially supported by JSPS Grant-in-Aid for Scientific Research 25400050.}

\maketitle

\begin{abstract} 
The question of when the Rees algebra $\calR (I)= \bigoplus_{n \ge 0}I^n$ of $I$ is an almost Gorenstein graded ring is explored, where $R$ is a two-dimensional regular local ring and $I$ a contracted ideal of $R$.  By \cite{GMTY1} it is known that $\calR(I)$ is an almost Gorenstein graded ring for every integrally closed ideal $I$ of $R$. The main results of the present paper show that if $I$ is a contracted ideal with $\mathrm{o}(I) \le 2$, then $\calR(I)$ is an almost Gorenstein graded ring, while if $\mathrm{o}(I) \ge 3$, then $\calR(I)$ is not necessarily an almost Gorenstein graded ring, even though $I$ is a contracted stable  ideal. Thus both affirmative answers and negative answers are given.
\end{abstract}

\section{Introduction}\label{intro}
Let $(R,\m)$ be a two-dimensional regular local ring. The purpose of this paper is to study the problem of when the Rees algebras of contracted ideals in $R$ are almost Gorenstein graded rings. In \cite{GMTY1} the authors showed that for every integrally closed ideal $I$ of $R$ the Rees algebra $\calR(I)=\bigoplus_{n \ge 0}I^n$ of $I$ is an almost Gorenstein graded ring. Since every integrally closed $\m$-primary ideal is contracted and stable (\cite{Z}), it seems quite natural to expect a similar affirmative answer for contracted stable ideals also.  Curiously, this is not the case and the answer depends on the order $\mathrm{o}(I)$ of the contracted ideals $I$. If $\mathrm{o}(I) \le 2$, then $\calR(I)$ is an almost Gorenstein graded ring, while if $\mathrm{o}(I) \ge 3$, then $\calR(I)$ is not necessarily an almost Gorenstein graded ring, which we shall show in this paper. But before entering details, let us  recall the definitions of almost Gorenstein local/graded rings and some historical notes as well.

For the last sixty years commutative algebra has been concentrated mostly in the study of Cohen-Macaulay rings/modules. Several experiences in our researches give the impression that Gorenstein rings are rather isolated in the class of Cohen-Macaulay rings. Gorenstein local rings are defined by the finiteness of self-injective dimension. There is, however, a substantial gap between the conditions of the finiteness of self-injective dimension and the infiniteness of it. The notion of almost Gorenstein ring is an attempt to go beyond this gap or a desire to find a new class of Cohen-Macaulay rings which might be non-Gorenstein but still good, say the next to Gorenstein rings.

The notion of almost Gorenstein local ring in our sense dates back to the paper \cite{BF} of V. Barucci and R. Fr\"oberg in 1997, where they introduced the notion to one-dimensional analytically unramified local rings and developed a very beautiful theory of almost symmetric numerical semigroups. As their definition is not flexible enough for the analysis of analytically ramified local rings, in 2013  S. Goto, N. Matsuoka, and T. T. Phuong \cite{GMP} relaxed the restriction and gave the definition of almost Gorenstein local rings for arbitrary but still one-dimensional Cohen-Macaulay local rings, using the first Hilbert coefficients of canonical ideals. They constructed in \cite{GMP} numerous examples of almost Gorenstein local rings which are analytically ramified, extending several important results of \cite{BF}. However it might be the biggest achievement of \cite{GMP} that the paper prepared for higher dimensional definitions and opened the door led to a frontier. In fact in 2015 S. Goto, R. Takahashi, and N. Taniguchi \cite{GTT}  gave the following definitions of almost Gorenstein local/graded rings of higher dimension and started the theory.

\begin{defn}[The local case]\label{1.1}
Let $(A,\fkm)$ be a Cohen-Macaulay local ring of dimension $d$, possessing the canonical module $\rmK_A$. Then we say that $A$ is an almost Gorenstein local ring, if there exists an exact sequence
$$
0 \to A \to \rmK_A \to C \to 0
$$
of $A$-modules such that either $C = (0)$ or $C \ne (0)$ and $\mu_A(C) = \e^0_\fkm(C)$, where  $\mu_A(C)$ denotes the number of elements in a minimal system of generators of $C$ and $$\e^0_\fkm(C) = \lim_{n\to \infty}(d-1)!{\cdot}\frac{\ell_A(C/\fkm^{n+1}C)}{n^{d-1}}$$ denotes the multiplicity of $C$ with respect to the maximal ideal $\fkm$ (here $\ell_A(*)$ stands for the length).
\end{defn}

Let us explain a little more about Definition \ref{1.1}. Let $(A,\fkm)$ be a Cohen-Macaulay local ring of dimension $d$ and assume that $A$ possesses the canonical module $\rmK_A$. The condition of Definition \ref{1.1} requires that  $A$ is embedded into   $\rmK_A$ and even though $A \ne \rmK_A$, the difference $C = \rmK_A/A$ between $\rmK_A$ and $A$ is an Ulrich $A$-module (cf. \cite{BHU}) and behaves well. Here we notice that for every exact sequence$$
0 \to A \to \rmK_A \to C \to 0
$$
of $A$-modules, $C$ is a Cohen-Macaulay $A$-module of dimension $d-1$, provided $C \ne (0)$ (\cite[Lemma 3.1 (2)]{GTT}).

\begin{defn}[The graded case]\label{1.2}
Let $R = \sum_{n \ge 0}R_n$ be a Cohen-Macaulay graded ring such that $A = R_0$ is a local ring. Assume that $A$ is a homomorphic image of a Gorenstein local ring and let $\rmK_R$ denote the graded canonical module of $R$. We set $d = \dim R$ and $a = \a(R)$ the $a$-invariant of $R$. Then we say that $R$ is an almost Gorenstein graded ring, if there exists an exact sequence 
$$
0 \to R \to \rmK_R(-a) \to C \to 0
$$
of graded $R$-modules such that either $C = (0)$ or $C \ne (0)$ and $\mu_R(C) = \e^0_M(C)$, where $M$ denotes the graded maximal ideal of $R$. 
\end{defn}

In Definition \ref{1.2} suppose $C \ne (0)$. Then $C$ is a Cohen-Macaulay graded $R$-module and $\dim_RC  = d -1$.  As 
$
\e_M^0(C) = \lim_{n\to\infty}(d-1)!{\cdot}\frac{\ell_R(C/M^{n+1}C)}{n^{d-1}},
$
we get $\e_{MR_M}^0(C_M)= \e^0_M(C)$, so that $C_M$ is an Ulrich $R_M$-module. Therefore since $\rmK_{R_M}= \left[\rmK_R\right]_M$, $R_M$ is an almost Gorenstein local ring if $R$ is an almost Gorenstein graded ring. The converse is not true in general (\cite[Theorems 2.7, 2.8]{GMTY2}, \cite[Example 8.8]{GTT}).

Let us  now consider a more specific setting where $(R,\m)$ is a two-dimensional regular local ring and $I$ is an $\fkm$-primary ideal of $R$. For simplicity we always assume that the field $R/\fkm$ is infinite. Then we say that $I$ is a contracted ideal of $R$ if 
$$
I {\cdot}R \left[\frac{\fkm}{x}\right] \cap R = I
$$
for some (actually, for every general) element $x \in \fkm \setminus \fkm^2$, where $\frac{\fkm}{x} = \left\{\frac{a}{x} \mid a \in \fkm\right\}$ which is considered inside of the quotient field $\mathrm{Q}(R)$ of $R$. Let $$\calR(I) = \bigoplus_{n \ge 0}I^n$$ be the Rees algebra of $I$ and set $\fkM = \m {\cdot} \calR (I) +[\calR (I)]_+$, the graded maximal ideal of $\calR (I)$. Let us fix  a minimal reduction $Q = (a,b)$ of $I$. Then $I$ is said to be stable, if $I^2 = QI$. This condition is equivalent to saying that $\calR(I)$ is a Cohen-Macaulay ring (\cite{GS}).

With this notation we have the following striking characterization of contracted stable ideals.

\begin{thm}[\cite{V2}]\label{V2}
The following conditions are equivalent.
\begin{enumerate}[$(1)$]
\item $\calR (I)$ is a Cohen-Macaulay ring and $\calR (I)_\fkM$ possesses maximal embedding dimension in the sense of \cite{S}.
\item $I$ is a contracted stable ideal of $R$.
\end{enumerate}
\end{thm}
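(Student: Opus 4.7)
The plan is to convert the equivalence $(1)\iff(2)$ into a numerical identity and then invoke Zariski's classical numerical characterization of contracted ideals. First, by Goto--Shimoda \cite{GS}, $\calR(I)$ is Cohen--Macaulay if and only if $I^2 = QI$, i.e., $I$ is stable, so both $(1)$ and $(2)$ subsume this and we may work throughout under the hypothesis that $I$ is stable. Note that $\fkM/\fkM^2 = \fkm/\fkm^2 \oplus I/\fkm I$, since the components of $\fkM$ of $t$-degree $\ge 2$ are absorbed by $\fkM^2$; consequently
\[
\mu(\fkM) = \mu(\fkm) + \mu(I) = 2 + \mu(I).
\]

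The principal computation is of the multiplicity $e(\fkM,\calR(I)_\fkM)$. A bigraded analysis shows that the degree-$k$ component of $\fkM^{n+1}$ equals $\fkm^{n+1-k}I^k t^k$ for $0 \le k \le n+1$, where stability is used to propagate the filtration across $t$-degrees, so that
\[
\ell\bigl(\calR(I)/\fkM^{n+1}\bigr) \;=\; \sum_{k=0}^{n} \ell\bigl(I^k/\fkm^{n+1-k}I^k\bigr).
\]
Isolating the degree-$3$ leading term in $n$ of this sum, one would show
\[
e(\fkM,\calR(I)_\fkM) \;=\; \mathrm{o}(I)+1,
\]
where $\mathrm{o}(I)$ denotes the $\fkm$-adic order of $I$; equivalently, $\mathrm{o}(I)+1$ is the multiplicity of the graded maximal ideal of the fiber cone $F(I) = \calR(I)/\fkm\calR(I)$, to which, under stability, the multiplicity of $\fkM$ in $\calR(I)$ reduces.

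Since $\dim \calR(I)_\fkM = 3$, Sally's criterion \cite{S} asserts that $\calR(I)_\fkM$ has maximal embedding dimension precisely when
\[
\mu(\fkM) \;=\; e(\fkM,\calR(I)_\fkM) + \dim\calR(I)_\fkM - 1,
\]
which by the two calculations above is equivalent to $\mu(I) = \mathrm{o}(I)+1$. By Zariski's classical theorem \cite{Z} on $\fkm$-primary ideals of a two-dimensional regular local ring with infinite residue field, one has $\mu(I) \le \mathrm{o}(I)+1$ with equality if and only if $I$ is contracted. Chaining these equivalences produces $(1)\iff(2)$.

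The main obstacle is the explicit multiplicity identity $e(\fkM,\calR(I)_\fkM) = \mathrm{o}(I)+1$. The sum $\sum_{k} \ell(I^k/\fkm^{n+1-k}I^k)$ mixes the $\fkm$-adic Hilbert function with the $I$-adic filtration, and one must use stability of $I$ precisely to prevent the higher Hilbert coefficients of the powers $I^k$ from contributing to the top-degree coefficient. Once this numerical identity is in hand, the remainder of the argument is a formal combination of Sally's and Zariski's theorems as above.
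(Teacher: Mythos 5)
The paper itself does not prove Theorem \ref{V2}; it is quoted from Verma \cite{V2}, so there is no internal proof to compare against. Your strategy --- dispose of Cohen--Macaulayness via \cite{GS} (equivalent to $I^2=QI$), then translate ``maximal embedding dimension'' into Sally's numerical condition $\mu(\fkM)=\mathrm{e}(\fkM,\calR(I)_\fkM)+\dim\calR(I)_\fkM-1$ and match it against Zariski's criterion $\mu_R(I)=\mathrm{o}(I)+1$ for contractedness (Proposition \ref{2.2}) --- is exactly the route of the cited source, and your two bookkeeping identities $\mu(\fkM)=\mu_R(I)+2$ and $\ell\bigl(\calR/\fkM^{n+1}\bigr)=\sum_{k=0}^{n}\ell\bigl(I^k/\m^{n+1-k}I^k\bigr)$ are both correct.

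The genuine gap is that the pivotal identity $\mathrm{e}(\fkM,\calR(I)_\fkM)=\mathrm{o}(I)+1$ is merely asserted (``one would show''), and everything hinges on it. It is Verma's mixed-multiplicity formula, and here is how to close it: write $\ell(R/\m^jI^k)$ for $j,k\gg0$ as the Bhattacharya polynomial with leading part $\tfrac12\bigl(\mathrm{e}_0j^2+2\mathrm{e}_1jk+\mathrm{e}_2k^2\bigr)$, where $\mathrm{e}_0=\mathrm{e}(\m)=1$, $\mathrm{e}_2=\mathrm{e}(I)$, and $\mathrm{e}_1=\mathrm{e}_1(\m\mid I)=\ell(R/(x,g))$ for general $x\in\m$ and $g\in I$; since the image of $I$ in the discrete valuation ring $R/(x)$ has order $\mathrm{o}(I)$ for general $x$, one gets $\mathrm{e}_1(\m\mid I)=\mathrm{o}(I)$. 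Substituting $\ell(I^k/\m^{n+1-k}I^k)=\ell(R/\m^{n+1-k}I^k)-\ell(R/I^k)$ into your sum and extracting the coefficient of $n^3$ (the finitely many boundary terms with $k$ or $n+1-k$ bounded contribute only $O(n^2)$) yields $\mathrm{e}(\fkM,\calR(I)_\fkM)=\mathrm{e}(\m)+\mathrm{e}_1(\m\mid I)=1+\mathrm{o}(I)$. Two of your explanatory remarks are also incorrect and should be removed: stability is not what identifies the degree-$k$ piece of $\fkM^{n+1}$ with $\m^{n+1-k}I^k$ (that follows for any ideal from $I^aI^b\subseteq I^{a+b}$ and degree counting), nor is it what controls the top-degree coefficient; and the multiplicity of the fiber cone $\calR(I)/\m\calR(I)$ is $\mathrm{o}(I)$, not $\mathrm{o}(I)+1$, so $\mathrm{e}(\fkM)$ does not ``reduce'' to it. With the mixed-multiplicity computation supplied and those remarks excised, your argument is complete.
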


The motivation of the present research has come from the question of when $\calR(I) = \bigoplus_{n \ge 0}I^n$ is an almost Gorenstein graded ring. In \cite{GMTY1} the authors showed that if $I$ is an integrally closed ideal, then  $\calR(I)$ is an almost Gorenstein graded ring. Since every integrally closed $\fkm$-primary ideal $I$ is contracted and stable (\cite{Z}; see also \cite{G, Hu}) and since $\calR (I)$ is a good Cohen-Macaulay ring as Theorem \ref{V2} guarantees, it seems quite natural to expect that $\calR(I)$ is an almost Gorenstein graded ring also for every contracted stable ideal $I$. This is, however, not true in general, as we shall report in Theorem \ref{4.2} of the present paper and the following two theorems \ref{1.3} and \ref{1.4}.  For each ideal $\fka$ of $R$ let $$\mathrm{o}(\fka) = \max\{n \in \Bbb Z \mid \fka \subseteq \fkm^n \}$$ and call it the order of $\fka$.

\begin{thm}\label{1.3}
Let $(R,\fkm)$ be a two-dimensional regular local ring with infinite residue class field. Let $I$ be a contracted $\fkm$-primary ideal of $R$ with $\mathrm{o}(I) \le 2$. Then $I$ is a stable ideal of $R$, whose Rees algebra $\calR(I)=\bigoplus_{n \ge 0}I^n$ is an almost Gorenstein graded ring. 
\end{thm}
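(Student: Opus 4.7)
The plan is to treat the two possible values of $\mathrm{o}(I)$ separately. When $\mathrm{o}(I)=1$, I would invoke the classical characterization of contracted $\fkm$-primary ideals in a two-dimensional regular local ring, namely $\mu_R(I)=\mathrm{o}(I)+1$ (Huneke). This forces $\mu_R(I)=2$, so $I$ is an $\fkm$-primary ideal generated by two elements, hence a parameter ideal $I=Q$. Stability $I^2=QI$ is then trivial and $\calR(I)=R[Qt]$ is a Gorenstein (hence almost Gorenstein) graded ring.

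When $\mathrm{o}(I)=2$ the same formula gives $\mu_R(I)=3$ and $I\subseteq\fkm^2$. Choose a general minimal reduction $Q=(a,b)\subseteq I$ (with $a,b\in\fkm^2\setminus\fkm^3$ for a generic choice) and a third generator $f$, so that $I=Q+(f)$. Stability then reduces to the single relation $f^2\in QI$. My plan is to pass to the quadratic transform $S=R[\fkm/x]$ for a general $x\in\fkm\setminus\fkm^2$: since $I\subseteq\fkm^2$ one has $IS=x^2J$ for an ideal $J\subseteq S$ whose order and number of generators are strictly smaller than those of $I$. The equality $(IS)^2=(QS)(IS)$ is then straightforward in the transform, and the defining relation $IS\cap R=I$ of contracted ideals, applied together with the analogous relation for the product $I^2$, descends the stability to $R$.

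For the almost Gorenstein property, once stability is in hand, Theorem \ref{V2} ensures that $\calR(I)$ is Cohen--Macaulay and that $\calR(I)_\fkM$ has maximal embedding dimension. Since the reduction number $r_Q(I)$ is at most one, standard descriptions of the graded canonical module $\rmK_{\calR(I)}$ in terms of the Hilbert--Burch resolution of $I=(a,b,f)$ over $R$ yield an explicit presentation. I would then construct a graded injection $\varphi\colon \calR(I)\hookrightarrow\rmK_{\calR(I)}(-\rma)$ and verify that the cokernel $C$ is a Cohen--Macaulay $\calR(I)$-module of dimension two satisfying $\mu_{\calR(I)}(C)=\rme^0_\fkM(C)$. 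After killing a regular sequence lifting the reduction $Q$, the Ulrich condition reduces to a finite-length check over the artinian fiber ring $\calR(I)/\fkm\calR(I)$, whose structure is governed by the contracted presentation of $I$.

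The main obstacle will be the third step: producing an explicit degree-zero embedding $\varphi$ and verifying that its cokernel is Ulrich. The identification of $\rmK_{\calR(I)}$ in a sufficiently concrete form---presumably as a fractional ideal of $\calR(I)$ built from the Hilbert--Burch matrix of the three-generated contracted ideal $I$---is the technical heart of the argument. The remaining steps (the trivial case $\mathrm{o}(I)=1$ and the passage to the quadratic transform in the stability proof) are comparatively routine given Theorem \ref{V2} and Huneke's characterization of contracted ideals.
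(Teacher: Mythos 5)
Your outline is correct only in its first, trivial step ($\rmo(I)=1$); the two substantive steps both have genuine gaps. For stability, the descent from the quadratic transform does not work as stated: with $S=R[\fkm/x]$, an equality $(IS)^2=(QS)(IS)$ gives $I^2=I^2S\cap R=QIS\cap R$ (using that $I^2$ is contracted, Proposition \ref{2.4}\,(1)), and this ideal \emph{contains} $QI$ but equals $QI$ only if $QI$ is itself contracted from $x$. Since $Q$ is a parameter ideal of order $2$, it is not contracted ($\mu_R(Q)=2\neq\rmo(Q)+1$), so the product criterion does not apply and the required equality $QIS\cap R=QI$ is unproved. Moreover the assertion that stability of $IS$ is ``straightforward'' is unsupported: the transform of a contracted ideal of order $2$ can again have order $2$ at a point of the first neighbourhood (for $I=(x^2,xy^4,y^5)$ and $u=x/y$ one gets the transform $(u^2,uy^3,y^3)$), so no induction on the order is available. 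The paper proves stability by an entirely different comparison (Lemma \ref{o(I)=2}): $\overline{I}$ is integrally closed, hence contracted and stable; after normalizing generators one has $\overline{I}=(a,b,c_1)$ and $I=(a,b,c)$ with $c=c_1h^{n}$ and $h^{n}\overline{I}\subseteq I$, whence $c^2=h^{2n}c_1^{\,2}\in h^{2n}Q\overline{I}\subseteq QI$.

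For the almost Gorenstein property your third step is a restatement of what must be proved rather than an argument, and you acknowledge as much. The actual proof requires several ingredients absent from your plan: the identification $\rmK_{\calR}(1)=J\calR$ with $J=Q:I$; Lemma \ref{3.1}, which reads off $J=(\xi,\eta)$ from the $\rmK_S$-dual of the $S$-free resolution of $\calR$; the normalization $J=(f,h^{q})$, $a=f^2+\beta g$, $b=\delta g$, $c=\delta f$; and then a genuine case division. When $\beta\in\fkm J$ one verifies the criterion of Theorem \ref{2.5} by an explicit ideal computation ($IJ=I(f-g)+aJ$); when $\beta=\varepsilon f+\rho g$ with $\varepsilon$ a unit one shows $\ell_R(\overline{I}/I)=0$, so $I$ is integrally closed and \cite{GMTY1} applies; when $\varepsilon\in\fkm$ one performs column operations on the presentation matrix of $\rmK_{\calR}$ to exhibit $\rmK_{\calR}/\calR\theta\cong[S/(f,\ell_1,\ell_2)](-1)$ directly. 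In particular the integral closure $\overline{I}$ and the invariant $n=\ell_R(\overline{I}/I)$ (Proposition \ref{length}) enter essentially into the almost Gorenstein part as well, not only into stability. As it stands, the proposal does not constitute a proof beyond the case $\rmo(I)=1$.
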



\if0

\begin{thm}\label{1.2} Let $(R,\fkm)$ be a two-dimensional regular local ring with infinite residue class field and write $\fkm = (x,y)$ with $x, y \in \fkm$. Choose integers $n$ and $\alpha$ so that  $0 < n \le 2\alpha$ and  set $I=(x^3, x^2y^{\alpha}, y^n)$ and $Q=(x^3, y^n)$. Then the following assertions hold true.
\begin{enumerate}[$(1)$]
\item $I^2 = QI$ and $\mathrm{o}(I) = 3$. 
\item $\calR(I)$ is an almost Gorenstein graded ring if and only if $2 \alpha =n$.
\end{enumerate}
\end{thm}

\fi


\noindent
If $\mathrm{o}(I) = 1$, then $I = Q~(=(a,b))$ is a parameter ideal of $R$, so that $\calR(I) = R[X,Y]/(aY-bX)$ is a Gorenstein ring, where $R[X,Y]$ denotes the polynomial ring. Therefore our interest in Theorem \ref{1.3} is the case $\mathrm{o}(I) = 2$. The simplest example of contracted ideals of order $2$ which are not integrally closed is the ideal $I = (x^2, xy^4, y^5)$, where $x,y$ is a regular system of parameters of $R$.

\begin{thm}\label{1.4} 
Let $(R,\fkm)$ be a two-dimensional regular local ring with infinite residue class field and write $\fkm = (x,y)$ with $x, y \in \fkm$. Choose integers $n$, $\alpha$, and $\beta$ so that  
\begin{center}
$0 < \alpha < \beta < n$, \ $n < \alpha + \beta$, \ $n + \alpha < 2\beta$, \ and \ $\beta < 2 \alpha$.
\end{center} Set $I=(x^3, x^2y^{\alpha}, xy^\beta, y^n)$ and $Q=(x^3, y^n)$. Then  $I$ is a contracted ideal of $R$ with $I^2 = QI$ and $\mathrm{o}(I) = 3$,  but $\calR(I)$ is not an almost Gorenstein graded ring.
\end{thm}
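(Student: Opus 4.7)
The plan is to verify the contracted-and-stable claims by direct monomial arithmetic, and then to obstruct the almost Gorenstein property by a controlled analysis of the graded canonical module $\mathrm{K}_{\mathcal{R}(I)}$.

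First I would handle the ideal-theoretic claims. The four monomials $x^3, x^2y^{\alpha}, xy^{\beta}, y^n$ have pairwise distinct $x$-degrees $3,2,1,0$, so they form a minimal system of generators of $I$, giving $\mu(I)=4$. The combined hypotheses force $\alpha\ge 2$, $\beta\ge 3$, $n\ge 4$, so every generator has $\fkm$-adic order at least $3$, and $x^3$ attains order $3$ exactly; hence $\mathrm{o}(I)=3$. Therefore $\mu(I)=\mathrm{o}(I)+1$, and by the Huneke--Lipman criterion (available since $R/\fkm$ is infinite) $I$ is contracted. For the stability $I^2=QI$, only the three non-trivial products $(x^2y^\alpha)^2$, $(x^2y^\alpha)(xy^\beta)$, $(xy^\beta)^2$ need to be checked to lie in $QI=(x^3,y^n)I$; these containments amount to $2\alpha\ge\beta$, $\alpha+\beta\ge n$, and $2\beta\ge n+\alpha$ respectively, all of which are satisfied (strictly) by hypothesis.

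Next I would set up the failure of the almost Gorenstein property. Since $I^2=QI$, the Rees algebra $\mathcal{R}(I)$ is Cohen--Macaulay of dimension $3$ with $\mathcal{R}(I)_{\fkM}$ of maximal embedding dimension by Theorem \ref{V2}, and its $a$-invariant equals $-1$ (standard for reduction number one over a two-dimensional regular local base). I would then compute the graded canonical module $\mathrm{K}_{\mathcal{R}(I)}$ explicitly, following the framework developed earlier in the paper (building on \cite{GMTY1}) for contracted $\fkm$-primary ideals, reading off its graded pieces from the syzygies of the generators of $I$ over $R$. Suppose for contradiction that $\mathcal{R}(I)$ is almost Gorenstein, so that there is a graded short exact sequence $0\to\mathcal{R}(I)\to\mathrm{K}_{\mathcal{R}(I)}(1)\to C\to 0$ with $C$ Cohen--Macaulay of dimension $2$ and Ulrich, meaning $\mu_{\mathcal{R}(I)}(C)=\mathrm{e}^0_{\fkM}(C)$.

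Finally I would derive the contradiction from the strictness of the inequalities. The embedding is multiplication by some $\xi\in[\mathrm{K}_{\mathcal{R}(I)}]_1$, and the Rees generators $x^3t,y^nt$ of $Q\mathcal{R}(I)$ provide a natural system of parameters for $C$, so that $\mathrm{e}^0_{\fkM}(C)=\ell_{\mathcal{R}(I)}\bigl(C/(x^3t,y^nt)C\bigr)$. The plan is to analyze $C=\mathrm{K}_{\mathcal{R}(I)}(1)/\mathcal{R}(I)\xi$ degree by degree and show that, no matter how $\xi$ is chosen, the classes coming from the ``middle'' generators $x^2y^\alpha t$ and $xy^\beta t$ produce elements of $C$ whose $\fkM$-classes cannot all be absorbed into $(x^3t,y^nt)C$, the slack being measured precisely by the positive integers $\alpha+\beta-n$ and $2\beta-(n+\alpha)$. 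These strict inequalities then translate into the strict inequality $\mu_{\mathcal{R}(I)}(C)<\mathrm{e}^0_{\fkM}(C)$, contradicting the Ulrich property. The main obstacle is carrying out this translation: one needs a clean enough description of $\mathrm{K}_{\mathcal{R}(I)}$ to perform a careful length count, and to show that no choice of $\xi$ evades the obstruction, since varying $\xi$ permutes candidate splittings but cannot close the numerical gap produced by the strict inequalities.
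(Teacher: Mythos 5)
Your first paragraph is fine and matches what the paper does implicitly: $\mu_R(I)=4=\mathrm{o}(I)+1$ gives contractedness via Proposition \ref{2.2}, and the three product checks for $I^2=QI$ reduce exactly to $2\alpha\ge\beta$, $\alpha+\beta\ge n$, $2\beta\ge n+\alpha$. The problem is the second half. First, a concrete error: $(x^3t,y^nt)$ is \emph{not} a system of parameters for $C$. Both elements lie in $\calR_+$, and the degree-zero component of $C/(x^3t,y^nt)C$ is $J/(h)$ (where $h=\varphi(1)\in J$ and $\rmK_{\calR}(1)=J\calR$), which has Krull dimension one over $R$ since $J$ is $\m$-primary and $(h)$ is a height-one ideal; hence $\ell_{\calR}(C/(x^3t,y^nt)C)=\infty$ and the proposed formula for $\e^0_{\fkM}(C)$ is meaningless. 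More importantly, even after replacing this by a genuine reduction of $\fkM$ with respect to $C$, the decisive step --- ``these strict inequalities then translate into $\mu_{\calR}(C)<\e^0_{\fkM}(C)$'' --- is exactly the assertion to be proved, and no mechanism for the length count is supplied; you acknowledge this yourself as the ``main obstacle.'' As it stands the proposal sets up the contradiction but does not derive one.

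For comparison, the paper never computes $\e^0_{\fkM}(C)$. It uses the Ulrich hypothesis only through its structural consequence $\fkM C_{\fkM}=(\xi,\eta)C_{\fkM}$ (\cite[Proposition 2.2 (2)]{GTT}), which together with $\mu_{\calR}(C)=2$ (from $h\notin\fkM\rmK_{\calR}$ and $\mathrm{r}(\calR)=\mu_R(J)=3$) gives $\mu_{\calR}(\fkM C)\le 4$. Computing $\fkM C/\fkM^2C$ degreewise yields $\mu_R(\m J/\m h)+\mu_R(IJ/Ih)\le 4$, and since $\mu_R(\m J)=4$ and $\mu_R(IJ)=6$ each summand is at least $2$, forcing $\mu_R(IJ/Ih)=2$, i.e.\ $\ell_R\bigl([Ih+\m IJ]/\m IJ\bigr)=4$. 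But writing $h=ax^2+bxy^{n-\beta}+cy^{n-\alpha}$, the strict inequalities $\alpha+\beta>n$, $2\beta>n+\alpha$, $\beta<2\alpha$ force $x^2y^{\alpha}h\equiv cx^2y^n$ and $xy^{\beta}h\equiv bx^2y^n$ modulo $\m IJ$, so that space is spanned by at most three independent classes --- a contradiction. This is the piece your outline is missing: the obstruction is located in $\mu_{\calR}(\fkM C)$, not in $\e^0_{\fkM}(C)$, and the strict inequalities enter precisely by collapsing the two ``middle'' generators of $Ih$ onto the single minimal generator $x^2y^n$ of $IJ$. If you want to salvage your route you would need to replace the multiplicity computation by this (or an equivalent) bound coming from the reduction $(\xi,\eta)$ of $\fkM$ on $C$.
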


\noindent
The simplest example is the ideal $I=(x^3,x^2y^3,xy^5,y^6)$.

We shall prove Theorem \ref{1.3} (resp. Theorem \ref{1.4}) in Section 3 (resp. Section 4). For the proofs we need some preliminaries on contracted ideals and Rees algebras as well, which we will summarize in Section 2. The proof given in \cite{GMTY1} for the case where the ideals $I$ are integrally closed heavily depends on the existence of joint reductions with joint reduction number $0$ (\cite{V1}). To prove Theorem \ref{1.3} one cannot use this result, since it is no longer true for contacted ideals. Instead, we have to evolve rather fine arguments in Section 3, applying the affirmative result for integrally closed ideals also. Section 5 is devoted to the analysis of non-contracted ideals of order $3$.

In what follows, unless otherwise specified, let $(R,\m)$ be a two-dimensional regular local ring with infinite residue class field. For each finitely generated $R$-module $M$ let $\mu_R(M)$ (resp. $\ell_R(M))$ denote the number of elements  in a minimal system of generators (resp. the length) of $M$.




\section{Preliminaries}
Let $I$ be an $\m$-primary ideal of $R$ and choose a parameter ideal $Q$ of $R$ so that $Q$ is a reduction of $I$. Hence $Q \subseteq I$ and $I^{r+1} = QI^r$ for some $r \ge 0$.

To begin with, let us recall the following characterization of contracted ideals. This follows from the results of \cite[Appendix 5]{ZS} and \cite{HSa, L, R}. Consult \cite{Hu}, \cite{HSa}, and \cite[Section 14]{SH} for detailed proofs. Here we notice that $\fkm I : x = I$ for every general element $x \in \fkm \setminus \m^2$, once $I$ is a contracted ideal of $R$.

\begin{prop}\label{2.2}
The following conditions are equivalent.
\begin{enumerate}[$(1)$]
\item $I$ is a contracted ideal of $R$.
\item $I$ is $\fkm$-full, that is $\m I : x =I$ for some $x \in \fkm$. 
\item $I: x = I: \m$ for some $x \in \fkm \setminus \fkm^2$.
\item $\mu_R(I) = \mathrm{o}(I) + 1$.
\end{enumerate}
When this is the case, one has $x \not\in \m^2$ for the element $x$ in condition $(2)$.
\end{prop}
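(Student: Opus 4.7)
The plan is to establish $(1)\Leftrightarrow(2)$, $(2)\Leftrightarrow(3)$, and $(2)\Leftrightarrow(4)$ separately, with $(2)$ as the hub, and then deduce the final assertion. Throughout, I fix a general $x\in\m\setminus\m^2$ and extend to a regular system of parameters $\m=(x,y)$. For $(1)\Leftrightarrow(2)$ I would use the identity
$$
IR[\m/x]\cap R \;=\; \bigcup_{n\ge 0}\bigl(\m^n I : x^n\bigr),
$$
which follows from $\m R[\m/x]=xR[\m/x]$, and hence $\m^nR[\m/x]=x^nR[\m/x]$, inside $\mathrm{Q}(R)$. Condition $(1)$ is the statement that this union equals $I$, so $(1)\Rightarrow(2)$ is immediate. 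For the converse I would prove $\m^nI:x^n=I$ by induction on $n$, the inductive step reducing to $\m^{n+1}I:x = \m^nI$ --- this is $\m$-fullness applied to $\m^nI$ in place of $I$, and it holds for every $n$ once $x$ is chosen as a \emph{superficial element} of $\m$ for $I$ (equivalently, once $\bar x\in\m/\m^2$ is a nonzerodivisor on $\mathrm{gr}_{\m}(R)/I^{\ast}$). Such $x$ exist since $R/\m$ is infinite.

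For $(2)\Leftrightarrow(3)$ the implication $(2)\Rightarrow(3)$ is a short direct computation: if $a\in I:x$ and $y\in\m$, then $x(ya)=y(xa)\in yI\subseteq \m I$, so $ya\in\m I:x = I$, forcing $\m a\subseteq I$, i.e.\ $a\in I:\m$; the reverse inclusion is automatic. For $(3)\Rightarrow(2)$ I would exploit the regular-sequence property of $x,y$: given $xa\in\m I = xI+yI$, write $xa = x\xi + y\eta$ with $\xi,\eta\in I$; then $x(a-\xi) = y\eta$, whence the Koszul relation on $(x,y)$ produces $c\in R$ with $a-\xi = yc$ and $\eta = xc$. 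Since $xc = \eta\in I$, condition $(3)$ yields $c\in I:x = I:\m$, so $yc\in I$ and $a = \xi+yc\in I$. Note $(2)$ holds with the same $x$ as in $(3)$.

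For $(2)\Leftrightarrow(4)$ the Hilbert--Burch theorem applies to the minimal free resolution
$$
0\longrightarrow R^{\mu_R(I)-1}\xrightarrow{\phi} R^{\mu_R(I)}\longrightarrow R\longrightarrow R/I\longrightarrow 0,
$$
realizing $I$ as the ideal of maximal minors of $\phi$, whose entries lie in $\m$. Each such minor is a determinant of size $\mu_R(I)-1$, hence has order at least $\mu_R(I)-1$, giving the universal bound $\mu_R(I)\le\mathrm{o}(I)+1$; condition $(4)$ is the equality case. To match this with $(2)$ I would use the exact sequence
$$
0\longrightarrow (\m I:x)/I\longrightarrow I/\m I\xrightarrow{\cdot x}\m I/\m^2 I
$$
together with the length identity $\ell_R\bigl((\m I:x)/\m I\bigr) = \ell_R\bigl(R/(xR+\m I)\bigr)$, reducing the identity $\m I:x = I$ to the numerical equality $\mu_R(I) = \mathrm{o}(I)+1$ for general $x$.

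Finally, the last assertion follows from $(2)$: if some $x\in\m^2$ satisfied $\m I:x = I$, then $x(I:\m)\subseteq \m^2(I:\m) \subseteq \m\cdot I = \m I$, so $I:\m\subseteq \m I:x = I$, contradicting the nonvanishing of the socle of $R/I$. The main obstacles I foresee are the propagation of $\m$-fullness to the powers $\m^n I$ in $(1)\Leftrightarrow(2)$, where superficiality of $x$ is essential, and the passage from $\m a\subseteq I$ to $a\in I$ in $(3)\Rightarrow(2)$, which is resolved by the Koszul relation on the regular sequence $x, y$.
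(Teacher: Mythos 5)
First, a remark on the comparison you asked for: the paper does not prove Proposition \ref{2.2} at all --- it quotes it from the literature (Zariski--Samuel, Lipman, Rees, Huneke--Sally; detailed proofs in Huneke and Swanson--Huneke). So your attempt can only be measured against those classical arguments. Your overall architecture is the standard one and most of the pieces are correct: the identification $IR[\m/x]\cap R=\bigcup_{n\ge0}(\m^nI:x^n)$, the Koszul-syzygy argument for $(3)\Rightarrow(2)$, the Hilbert--Burch bound $\mu_R(I)\le\mathrm{o}(I)+1$, the length identity $\ell_R\bigl((\m I:x)/\m I\bigr)=\ell_R\bigl(R/(xR+\m I)\bigr)$, and the socle argument for the final assertion (which correctly uses only condition $(2)$, so it may be invoked up front to get $x\notin\m^2$).

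The genuine gap is in $(2)\Rightarrow(1)$. The inductive step $\m^{n+1}I:x=\m^nI$ is precisely the assertion that each $\m^nI$ is again $\m$-full with respect to the same $x$, and your justification via superficiality does not work: the parenthetical criterion is vacuous, since $I$ is $\m$-primary, so $I^{*}$ contains a power of the irrelevant ideal and $\gr_\m(R)/I^{*}$ is a nonzero Artinian ring on which every linear form is a zerodivisor; and genuine superficiality for the filtration $\{\m^nI\}$ only controls $\m^{n+1}I:x$ for $n\gg0$, whereas the induction needs it for every $n$ --- demanding it in all degrees is exactly the $\m$-fullness of all the $\m^nI$, i.e.\ what is to be proved. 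The repair is elementary and already in your toolkit: since $x\notin\m^2$, write $\m=(x,y)$; if $rx\in\m^{n+1}I=x\,\m^nI+y\,\m^nI$, say $rx=xa+yb$ with $a,b\in\m^nI$, then the Koszul relation on the regular sequence $x,y$ gives $r-a=cy$ and $b=cx$, so $c\in\m^nI:x=\m^{n-1}I$ by the inductive hypothesis and $r=a+cy\in\m^nI$. Separately, in $(2)\Leftrightarrow(4)$ the displayed sequence does not typecheck --- $(\m I:x)/I$ is not a submodule of $I/\m I$ --- and the crux of that equivalence, namely $\ell_R\bigl(R/(xR+\m I)\bigr)=\mathrm{o}(\m I)=\mathrm{o}(I)+1$ for general $x$ (computed in the discrete valuation ring $R/xR$), is stated as a reduction but never carried out; once it is, comparing with $\ell_R(I/\m I)=\mu_R(I)$ and the inclusion $I\subseteq\m I:x$ closes the loop, and it also supplies the missing passage from ``some $x$'' to ``general $x$'' that your argument silently uses.
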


The following result might be known. We include a brief proof for the sake of completeness.

\begin{lem}\label{2.3} The following conditions are equivalent.
\begin{enumerate}[$(1)$]
\item $I$ is a contracted ideal of $R$.
\item $\m I = \m a + x I$  for some $x \in \m$ and $a \in I$.
\end{enumerate}
When this is the case, one can choose the element $a \in I$ so that $a \in Q \setminus \m Q$.
\end{lem}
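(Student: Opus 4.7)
The plan is to prove $(2) \Rightarrow (1)$ and $(1) \Rightarrow (2)$ separately, using crucially that $R$ is a two-dimensional regular local ring (so every $x \in \m \setminus \m^2$ is prime and $R/xR$ is a DVR) together with Proposition~\ref{2.2}.

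For $(2) \Rightarrow (1)$, I would assume $\m I = \m a + xI$ and aim to verify $\m I : x = I$. First, $x \notin \m^2$: otherwise $xI \subseteq \m^2 I$, so $\m I = \m a + \m \cdot \m I$, and Nakayama forces $\m I = \m a \subseteq aR$, contradicting $\m$-primariness of $\m I$ (the principal ideal $aR$ has height $\le 1$). Next, $a \notin xR$: otherwise $\m a \subseteq xR$, hence $\m I \subseteq xR$, and then $I \subseteq xR : \m = xR$ (since $xR$ is a prime with $\m \not\subseteq xR$), again contradicting $\m$-primariness. Granting these, for any $r \in \m I \cap xR$ write $r = ma + xi$ with $m \in \m$ and $i \in I$; then $ma = r - xi \in xR$, and primeness of $x$ together with $a \notin xR$ yield $m \in xR$, say $m = xm'$, whence $r = x(m'a + i) \in xI$. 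Therefore $\m I \cap xR = xI$, equivalently $\m I : x = I$, so $I$ is contracted by Proposition~\ref{2.2}.

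For $(1) \Rightarrow (2)$, I would invoke Proposition~\ref{2.2} to pick $y \in \m \setminus \m^2$ with $\m I : y = I$ (equivalently $\m I \cap yR = yI$), and complete $y$ to a regular system of parameters $\m = (x, y)$. Set $n = \mathrm{o}(I)$. The short exact sequence $0 \to R/I \xrightarrow{y} R/\m I \to R/(\m I + yR) \to 0$, combined with $\ell_R(R/\m I) = \ell_R(R/I) + \mu_R(I)$, gives $\ell_R(R/(\m I + yR)) = \mu_R(I) = n + 1$, the last equality by Proposition~\ref{2.2}(4). Since $R/yR$ is a DVR in which the image of $x$ is a uniformizer, every proper ideal is a power of $\m/yR$; length $n + 1$ then forces $\m I + yR = x^{n+1}R + yR$, and a parallel argument with $I$ in place of $\m I$ yields $I + yR = x^nR + yR$. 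Pick $a \in I$ with $a \equiv x^n \pmod{yR}$, say $a = x^n + y\gamma$. The identification $\m I \cap yR = yI$ gives $\m I/yI \cong (\m I + yR)/yR$, a cyclic $R$-module generated by the class of $xa \equiv x^{n+1}$. Hence $\m I = xaR + yI = \m a + yI$, since $yaR \subseteq yI$.

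For the final assertion, the element $a = x^n + y\gamma$ has order $n$ and initial form $a^\ast = X^n + Y \cdot (\text{lower})$ in $\mathrm{gr}_\m(R) = k[X, Y]$, in particular not divisible by $Y$. Because the residue field is infinite, one can choose $b \in I$ of order $n$ whose initial form $b^\ast$ is coprime to $a^\ast$ in $k[X,Y]$; then $a, b$ form a regular sequence in $R$, so $Q := (a, b)$ is a parameter ideal, and standard reduction theory in two-dimensional regular local rings ensures $Q$ is a minimal reduction of $I$. Since $a$ is a minimal generator of $Q$, this gives $a \in Q \setminus \m Q$, as claimed. The main obstacle I expect is the length bookkeeping in $(1) \Rightarrow (2)$: it hinges on invoking $\mu_R(I) = \mathrm{o}(I) + 1$ at exactly the right moment and on the identification $\m I \cap yR = yI$ to translate $\m I/yI$ into a concrete ideal of the DVR $R/yR$.
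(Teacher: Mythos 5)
Your two implications are essentially sound. The direction $(2)\Rightarrow(1)$ is a correct variant of the paper's argument (you use primeness of $x$ where the paper uses regularity of the sequence $a,x$; both reduce to $\m I\cap xR=xI$). Your $(1)\Rightarrow(2)$ takes a genuinely different, more computational route: the paper passes to $D=R/(x)$, observes $ID=QD$ because $D$ is a DVR and $Q$ is a reduction of $I$, writes $QD=aD$ with $a\in Q\setminus\m Q$, and finishes with the modular law $\m I\subseteq[\m a+(x)]\cap\m I=\m a+x(\m I:x)=\m a+xI$. Your length bookkeeping in $R/yR$ reaches the same conclusion (the step ``a parallel argument yields $I+yR=x^nR+yR$'' quietly needs $\ell_R((I:y)/I)=n$, i.e.\ the type of $R/I$ equals $\mu_R(I)-1$ via Hilbert--Burch, or else the sandwich $\m I+yR\subseteq I+yR\subseteq\m^n+yR$ plus a Nakayama argument; this is fixable but should be said).

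The genuine gap is in the final assertion. First, the $Q$ in the statement is the reduction fixed at the start of Section 2 (and Proposition \ref{2.4}(2) later uses precisely that $a$ lies in \emph{that} $Q$, to get $Q:\m I=Q:xI$); you instead manufacture a new parameter ideal $(a,b)$ around your $a$, which does not prove the assertion as it is used. Second, and more seriously, the construction itself fails: an element $b\in I$ of order $n$ with initial form coprime to $a^\ast$ need not exist. Take $I=(x^2,xy^4,y^5)$, the paper's own example of a contracted non-integrally-closed ideal of order $2$: every element of $I$ of order $2$ has initial form a unit multiple of $X^2$, so no such $b$ exists. Even when such a $b$ does exist, coprimality of the initial forms forces $\e^0_\m\bigl((a,b)\bigr)=\ell_R(R/(a,b))=n^2$, whereas $\e^0_\m\bigl((a,b)\bigr)\ge \e^0_\m(I)$ and $\e^0_\m(I)>n^2$ for most contracted $I$ (again $e=10>4$ in the example), so $(a,b)$ cannot be a reduction of $I$. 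The clean fix is the paper's: since every ideal of the DVR $D=R/(x)$ is integrally closed, the reduction $Q$ of $I$ satisfies $ID=QD$, and $QD$ is principal, generated by the image of some $a\in Q\setminus\m Q$; this $a$ also satisfies $a\equiv(\text{unit})\,x^n\bmod xR$ in your notation, so it can replace the $a$ you constructed and the rest of your argument for $(1)\Rightarrow(2)$ goes through with the correct $a$.
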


\begin{proof}
(1) $\Rightarrow$ (2) 
By Proposition \ref{2.2} $\m I : x = I$ for some $x \in \m \setminus \m^2$. We set $D=R/(x)$. Then since $D$ is a discrete valuation ring and $Q$ is a reduction of $I$, $ID = QD$. Let us write $QD = aD$ with $a \in Q \setminus \m Q$. Then  $I \subseteq (a, x)$, so that 
$$
\m I \subseteq [\m a + (x)] \cap \m I = \m a + x (\m I : x) = \m a + x I.
$$
Hence $\m I = \m a + x I$.

(2) $\Rightarrow$ (1)
Notice that $a,x$ is a system of parameters of $R$, since $\fkm I \subseteq (a,x)$. Let $h \in \m I : x$ and write $x h = a f + x g$ with $f \in \m$ and $g \in I$. Then since the sequence $a, x$ is  $R$-regular, we get $h -g \in (a) \subseteq I$, whence $\m I : x \subseteq I$. Thus  $I$ is a contracted ideal by Proposition \ref{2.2}.
\end{proof}

We apply Lemma \ref{2.3} to get the following, where assertion $(1)$ is known by \cite{Z}. Let us include a brief proof in our context.

\begin{prop}\label{2.4} Suppose that $I$ is a contracted ideal of $R$. 
\begin{enumerate}[$(1)$]
\item Let $J$ be an $\fkm$-primary ideal of $R$ and suppose that $J$ is contracted. Then $IJ$ is also  contracted.
\item  Suppose that $Q \ne I$. Then $Q:I$ is a contracted ideal of $R$.
\end{enumerate}
\end{prop}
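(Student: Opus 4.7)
The plan is to derive both assertions from the concrete decomposition supplied by Lemma \ref{2.3}, namely $\fkm I = \fkm a + xI$ with $a \in Q \setminus \fkm Q$ and $x \in \fkm \setminus \fkm^2$.

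For (1), I would first use Proposition \ref{2.2} together with the preceding remark that ``$\fkm I : x = I$ holds for every general $x \in \fkm \setminus \fkm^2$'' in order to select a single $x \in \fkm \setminus \fkm^2$ that witnesses contractedness simultaneously for $I$ and for $J$. Lemma \ref{2.3} then produces $a \in I$ and $b \in J$ with $\fkm I = \fkm a + xI$ and $\fkm J = \fkm b + xJ$. A short manipulation gives
$$
\fkm(IJ) = (\fkm I)J = \fkm a J + x(IJ) = a\fkm J + x(IJ) = a(\fkm b + xJ) + x(IJ) = \fkm(ab) + x(aJ + IJ) = \fkm(ab) + x(IJ),
$$
the last equality because $aJ \subseteq IJ$. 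Since $ab \in IJ$ and $x \in \fkm$, the $(2)\Rightarrow(1)$ direction of Lemma \ref{2.3} gives that $IJ$ is contracted.

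For (2), apply Lemma \ref{2.3} to $I$ to obtain $x \in \fkm \setminus \fkm^2$ and $a \in Q \setminus \fkm Q$ with $\fkm I = \fkm a + xI$. The key observation is that $Q : \fkm a = R$: indeed $a \in Q$ forces $\fkm a \subseteq \fkm Q \subseteq Q$, and hence $h \cdot \fkm a \subseteq \fkm a \subseteq Q$ for every $h \in R$. Combining this with the elementary identity $Q : (J + K) = (Q:J) \cap (Q:K)$, one computes
$$
(Q : I) : \fkm \;=\; Q : \fkm I \;=\; Q : (\fkm a + xI) \;=\; (Q : \fkm a) \cap (Q : xI) \;=\; Q : xI \;=\; (Q : I) : x.
$$
Proposition \ref{2.2}(3) then forces $Q : I$ to be contracted; the hypothesis $Q \ne I$ is used only to guarantee that $Q : I$ is a proper (and thus $\fkm$-primary, since $Q \subseteq Q : I$) ideal, so that the conclusion is meaningful.

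The main obstacle is recognizing, in (2), that the ``extra piece'' $\fkm a$ appearing in the Lemma \ref{2.3} decomposition is swallowed up by the colon with $Q$, which reduces the computation of $(Q:I):\fkm$ to that of $(Q:I):x$. Once this is seen, the rest is formal. In (1) the only delicate point is the common choice of the general element $x$ working for both $I$ and $J$; thereafter the proof amounts to shuttling $\fkm$ and $x$ past $I$, $J$, $a$, $b$ and absorbing $aJ$ into $IJ$.
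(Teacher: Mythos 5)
Your proof is correct and follows essentially the same route as the paper: the same computation $\fkm(IJ)=\fkm(ab)+x(IJ)$ for (1), and for (2) the same reduction $(Q:I):\fkm = Q:\fkm I = Q:xI=(Q:I):x$ based on the observation that $\fkm a\subseteq Q$ kills the extra term. The one point the paper makes explicit that you merely assert is that the element $x$ furnished by Lemma \ref{2.3} automatically lies outside $\fkm^2$ (if $x\in\fkm^2$, Nakayama's lemma would give $\fkm I=\fkm a\subseteq (a)$, impossible for an $\fkm$-primary ideal), which is needed before invoking Proposition \ref{2.2}(3).
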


\begin{proof}
(1) We write $\fkm I = \fkm a + x I$ and $\fkm J = \fkm b + x J$ with $x \in \fkm$ and $a \in I$ and $b \in J$. Then
\begin{center}
$\m (IJ) = (\m a+xI)J = \m (aJ) + x(IJ) =a (\m b + xJ) + x(IJ) = \m(ab) + x(IJ)$.
\end{center}
Hence $IJ$ is a contracted ideal of $R$ by Lemma \ref{2.3}.

(2) Choose $x \in \m$ and $a \in Q \setminus \m Q$ so that $\m I = \m a + x I$.  Then $x \not\in \fkm^2$; otherwise $\fkm I = \fkm a \subseteq (a)$ by Nakayama's lemma, which is impossible because $\fkm I$ is an $\fkm$-primary ideal. We set $J = Q:I$. Then
$$
J: \m = (Q: I) : \m = Q: \m I = Q: xI = (Q: I) : x =J: x
$$
whence $J$ is a contracted ideal by Proposition \ref{2.2}.
\end{proof}

Let us consider the Rees algebra of $I$. We set 
$$
\calR = \calR(I) = R[It] \subseteq R[t]
$$
where $t$ denotes an indeterminate over $R$. Let $\fkM = \m \calR + \calR_+$ be  the unique graded maximal ideal of $\calR$.  We assume that $I^2 = QI$ and set $J=Q: I$.  Hence $\calR$ is a Cohen-Macaulay ring (\cite{GS}) with  $\dim \calR = 3$ and $\rma(\calR) = -1$, whose graded canonical module $\rmK_\calR$ is given by the following.

\begin{prop}[{\cite{GMTY1, U}}]\label{2.4a}
$
\rmK_{\calR}(1) = J \calR.
$
\end{prop}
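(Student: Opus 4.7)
The plan is to compute $\rmK_\calR$ by a duality argument through the Gorenstein subring $\calR(Q)\hookrightarrow \calR(I)$, following Ulrich.

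First I would observe that $\calR(Q)\cong R[X,Y]/(aY-bX)$ is a three-dimensional Gorenstein graded ring with $\rma(\calR(Q))=-1$, so its graded canonical module is $\rmK_{\calR(Q)}\cong \calR(Q)(-1)$. Since the hypothesis $I^{n+1}=QI^n$ iterates to $I^n = Q^{n-1}I$ for every $n\ge 1$, the inclusion $\calR(Q)\hookrightarrow \calR(I)$ is module-finite between Cohen-Macaulay graded rings of the same dimension; consequently the standard graded canonical-module duality for finite extensions yields
\[
\rmK_{\calR(I)} \;\cong\; \Hom_{\calR(Q)}\!\bigl(\calR(I),\rmK_{\calR(Q)}\bigr) \;\cong\; \Hom_{\calR(Q)}\!\bigl(\calR(I),\calR(Q)\bigr)(-1).
\]

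Next I would realise the right-hand side as the fractional ideal $\calR(Q):_{\mathrm{Frac}(\calR(Q))}\calR(I)$ inside the common graded field of fractions (both rings are graded domains sitting in $R[t]$, and $t\in\mathrm{Frac}(\calR(Q))$ since $at,bt\in \calR(Q)$ with $a\neq 0$). A homogeneous element $gt^{n-1}$ of degree $n-1$ lies in this conductor iff $gI^k\subseteq Q^{n-1+k}$ for every $k\ge 0$; using $I^k = Q^{k-1}I$ for $k\ge 1$ together with the complete-intersection identity $Q^m:Q = Q^{m-1}$, these infinitely many conditions collapse to the single requirement $gI\subseteq Q^n$. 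Since moreover $Q^n:I\subseteq Q^n:Q = Q^{n-1}$, this shows $[\rmK_{\calR(I)}]_n = Q^n:I$ for every $n\ge 1$.

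The key identity to conclude is then $Q^n:I = JI^{n-1}$, which assembles degree-wise into $\rmK_{\calR(I)}(1) = J\calR(I)$, since $[J\calR(I)]_{n-1} = JI^{n-1}$. The inclusion $JI^{n-1}\subseteq Q^n:I$ is direct from $JI\subseteq Q$ and $I^n = Q^{n-1}I$:
\[
(JI^{n-1})\cdot I \;=\; JI^n \;=\; JQ^{n-1}I \;=\; Q^{n-1}(JI) \;\subseteq\; Q^{n-1}Q \;=\; Q^n.
\]
The main obstacle is the reverse inclusion $Q^n:I\subseteq JI^{n-1}$ for $n\ge 2$. This is not formal from $I^2 = QI$ and depends on the fact that $Q=(a,b)$ is a grade-two complete intersection in the Gorenstein ring $R$, so that the Koszul complex on $a,b$ resolves $R/Q$ and the linkage $(R/I)\sim (R/J)$ through $Q$ governs the identification $\omega_{R/I}\cong J/Q$. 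Propagating this linkage through the $Q$-adic filtration to all $m\ge 2$ is precisely the computation of Ulrich \cite{U}; the statement in our precise form is recorded in \cite{GMTY1}.
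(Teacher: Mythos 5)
The paper itself gives no argument for Proposition \ref{2.4a}; it is quoted from \cite{GMTY1, U}. Your route --- dualizing the module-finite extension $\calR(Q)\subseteq\calR(I)$ into $\rmK_{\calR(Q)}\cong\calR(Q)(-1)$, identifying $\Hom_{\calR(Q)}(\calR(I),\calR(Q))$ with a conductor, and reducing to $[\rmK_{\calR}]_n\cong Q^n:I$ --- is exactly the standard (Ulrich-style) derivation, and your degree bookkeeping, including the collapse of the conditions $gI^k\subseteq Q^{n-1+k}$ to the single condition $gI\subseteq Q^n$, is correct.

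The problem is that you then defer the inclusion $Q^n:I\subseteq JI^{n-1}$ ($n\ge 2$) to ``the computation of Ulrich,'' and that inclusion is the entire non-formal content of the proposition; a proof that ends by citing the very references the proposition is attributed to is not a proof. The gap is real but easy to close, and more cheaply than your appeal to linkage suggests. Since $a,b$ is a regular sequence, $\gr_Q(R)\cong (R/Q)[A,B]$, so $Q^{n-1}/Q^n$ is a free $R/Q$-module on the monomials $a^ib^j$ with $i+j=n-1$. Given $g\in Q^n:I\subseteq Q^n:Q=Q^{n-1}$, write $g=\sum_{i+j=n-1}c_{ij}a^ib^j$; for $y\in I$ the relation $gy=\sum (c_{ij}y)a^ib^j\in Q^n$ forces $c_{ij}y\in Q$ for all $i,j$ by freeness, whence $c_{ij}\in Q:I=J$ and $g\in JQ^{n-1}+Q^n=JQ^{n-1}\subseteq JI^{n-1}$. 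Combined with your inclusion $JI^{n-1}\subseteq Q^n:I$ (which is where $I^n=Q^{n-1}I$ is actually used), this gives $Q^n:I=JQ^{n-1}=JI^{n-1}$ and completes the argument. I recommend you replace the final paragraph of your proposal with this computation.
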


One of the methods to see  whether $\calR$ is an almost Gorenstein graded ring is the following.

\begin{thm}$($\cite[Theorem 2.3]{GMTY1}$)$\label{2.5}
 The following conditions are equivalent.
\begin{enumerate}[$(1)$]
\item There exists an exact sequence
$$
0 \to \calR \to \rmK_{\calR}(1) \to C \to 0
$$
of graded $\calR$-modules such that $$\fkM C = (\xi, \eta)C$$ for some homogeneous elements $\xi, \eta$ of $\fkM$.
\item There exist elements $f\in \m$, $g \in I$, and $h \in J$ such that
$$
IJ = gJ + I h \ \ \ \text{and} \ \ \ \m J = f J + \m h.
$$
\end{enumerate}
When this is the case, $\calR$ is an almost Gorenstein graded ring. 
\end{thm}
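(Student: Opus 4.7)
The plan is to leverage Proposition \ref{2.4a}, which identifies $\rmK_{\calR}(1)$ with the Rees-ideal $J\calR$. Any graded $\calR$-linear embedding $\varphi\colon \calR\hookrightarrow J\calR$ is encoded by the element $h:=\varphi(1)\in [J\calR]_0=J$, and the cokernel $C:=J\calR/h\calR$ has graded pieces $[C]_n = JI^n/hI^n$ for $n\geq 0$. A preliminary computation shows that $[\fkM C]_n=[C]_n$ for every $n\geq 1$ (because $(It)\cdot [C]_{n-1}$ equals $(IJI^{n-1}+hI^n)/hI^n=JI^n/hI^n$, already filling $[C]_n$, where one uses $h\in J$), while $[\fkM C]_0 = (\m J+hR)/hR$. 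This reduces the condition $\fkM C=(\xi,\eta)C$ to graded identities in degrees $0$ and $1$.

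For $(2)\Rightarrow(1)$ I would take the embedding $\varphi(1)=h$ and set $\xi:=f\in[\calR]_0=R$ and $\eta:=gt\in[\calR]_1=It$, then verify $\fkM C=(\xi,\eta)C$ degree by degree. In degree $0$ the identity $\m J=fJ+\m h$ yields $\m J+hR=fJ+hR$, so $\m[C]_0=f[C]_0$. In degree $n\geq 1$, multiplying $IJ=gJ+Ih$ by $I^{n-1}$ gives $JI^n=gJI^{n-1}+hI^n$, which is exactly what is needed so that $[(\xi,\eta)C]_n = (fJI^n+gJI^{n-1}+hI^n)/hI^n = JI^n/hI^n = [\fkM C]_n$.

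For $(1)\Rightarrow(2)$, given an exact sequence with the Ulrich property, set $h:=\varphi(1)\in J$. I would first analyse the degrees of the homogeneous $\xi,\eta$: the fact that $\m[C]_0\subseteq (\xi,\eta)C$ forces one of them, say $\xi$, to have degree $0$, hence $\xi=f\in\m$; then comparison of the degree-$1$ piece forces (except when $J$ is principal, where the statement is trivial) $\eta=gt$ with $g\in I$. Extracting the degree-$0$ equation gives $\m J+hR=fJ+hR$, and extracting the degree-$1$ equation, after a Nakayama step absorbing the contribution $fJI\subseteq\m\cdot JI$, produces $IJ=gJ+Ih$. The main obstacle is promoting the degree-$0$ equality to the sharper form $\m J=fJ+\m h$: this requires $\m J\cap hR=\m h$, equivalently $h\notin\m J$. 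I expect to resolve this by noting that since $\calR\to J\calR$ can be chosen freely, one may replace $h$ by a minimal generator of $J$ (adjusting $f,g$ accordingly); the fact that this replacement preserves the Ulrich property is forced by the graded Nakayama computation of $\mu_\calR(C)=\mu_R(J/hR)$ together with the reduction-number-zero identity for the multiplicity.

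For the final claim, once $\fkM C=(\xi,\eta)C$ the pair $(\xi,\eta)$ is a homogeneous reduction of $\fkM$ with reduction number zero on the two-dimensional Cohen-Macaulay graded $\calR$-module $C$, whence
\[
\e^0_\fkM(C)=\ell_\calR(C/(\xi,\eta)C)=\ell_\calR(C/\fkM C)=\mu_\calR(C),
\]
so $C$ is Ulrich and $\calR$ is an almost Gorenstein graded ring by Definition \ref{1.2}.
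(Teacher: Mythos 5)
This theorem is quoted in the paper from \cite{GMTY1} without proof, so I can only judge your argument on its own terms. Your reduction to the graded pieces of $C=J\calR/h\calR$, the observation $[\fkM C]_n=[C]_n$ for $n\ge 1$, the verification of $(2)\Rightarrow(1)$, and the closing computation $\e^0_\fkM(C)=\ell_\calR(C/(\xi,\eta)C)=\ell_\calR(C/\fkM C)=\mu_\calR(C)$ are all correct (for the last step say explicitly that $C$ is Cohen--Macaulay of dimension $2$, by \cite[Lemma 3.1]{GTT}, so that $\xi,\eta$ is a $C$-regular system of parameters and the length of $C/(\xi,\eta)C$ equals its multiplicity). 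The degree analysis of $\xi,\eta$ in $(1)\Rightarrow(2)$ is also sound once you note that $\m J\not\subseteq hR$ and $IJ\ne hI$ because $\m J$ and $IJ$ are $\m$-primary while $hR$ and $hI$ have height one.

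The genuine gap is exactly where you flag it: upgrading $\m J+hR=fJ+hR$ to $\m J=fJ+\m h$, i.e.\ proving $h\notin\m J$. Your proposed fix --- replace $h$ by a minimal generator of $J$ --- does not work as written: changing $h$ changes the embedding and hence the cokernel, and nothing you say shows that the new cokernel again satisfies $\fkM C'=(\xi',\eta')C'$; ``the Ulrich property is forced by\dots'' is an assertion, not an argument. Fortunately the gap closes with what you already have, provided you derive the degree-one identity first: you have shown $IJ=gJ+Ih$. If $h\in\m J$, then $Ih\subseteq\m(IJ)$, so $IJ=gJ+\m(IJ)$ and Nakayama gives $IJ=gJ\subseteq(g)$, absurd since $IJ$ is $\m$-primary and $(g)$ has height one. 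Hence $h\notin\m J$, so $\m J:h=\m$ (it is an ideal containing $\m$ and not equal to $R$), whence $\m J\cap hR=\m h$ and any relation $u=fj'+rh$ with $u\in\m J$ forces $r\in\m$, giving $\m J=fJ+\m h$. Alternatively, $h\notin\fkM\cdot\rmK_{\calR}(1)$ is \cite[Corollary 3.10]{GTT}, whose degree-zero component is precisely $h\notin\m J$; this is how the paper itself invokes the fact in the proofs of Theorems \ref{1.4} and \ref{5.1}.
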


For each Cohen-Macaulay local ring $(A,\n)$ of dimension $d$ we denote by $$\rmr(A) = \ell_A(\Ext_A^d(A/\fkn,A))$$ the Cohen-Macaulay type of $A$ (\cite[Definition 1.20]{HK}).

\begin{prop}\label{2.8}
Suppose that $I$ is a contracted ideal of $R$. Then $\rmo(I) = \rmo(J) + 1$.
\end{prop}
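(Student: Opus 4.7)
The plan is to convert both orders into minimal numbers of generators via Proposition \ref{2.2}, and then derive the key identity $\mu_R(J) = \mu_R(I) - 1$ by computing the Cohen-Macaulay type of the localization $\calR(I)_\fkM$ in two different ways.

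First I would dispose of the degenerate case $I = Q$: here $\mathrm{o}(I) = 1$, $J = Q : Q = R$, $\mathrm{o}(J) = 0$, and the claim holds. Henceforth assume $I \ne Q$. By Proposition \ref{2.4}(2) the ideal $J$ is then also contracted, so Proposition \ref{2.2} applies to both $I$ and $J$ and gives
$$
\mu_R(I) = \mathrm{o}(I) + 1, \qquad \mu_R(J) = \mathrm{o}(J) + 1.
$$
Thus it suffices to prove $\mu_R(J) = \mu_R(I) - 1$.

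For this I would compute the Cohen-Macaulay type $\rmr(\calR_\fkM)$ of the three-dimensional Cohen-Macaulay local ring $\calR_\fkM$ in two ways. On one hand, Theorem \ref{V2} tells us that $\calR_\fkM$ has maximal embedding dimension in the sense of Sally; the standard type formula for such rings then gives
$$
\rmr(\calR_\fkM) \;=\; \mu(\fkM\calR_\fkM) - \dim\calR_\fkM \;=\; \bigl(\mu_R(\fkm) + \mu_R(I)\bigr) - 3 \;=\; \mu_R(I) - 1.
$$
On the other hand, Proposition \ref{2.4a} identifies the graded canonical module of $\calR$ as $J\calR(-1)$, from which $\rmr(\calR_\fkM) = \mu_R(J)$ once one observes that $J\calR$ is generated in degree zero (since $(J\calR)_n = \calR_n \cdot J$) and that the positive-degree part of $\fkM$ contributes nothing to degree zero, so that $J\calR/\fkM \cdot J\calR \cong J/\fkm J$. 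Equating the two expressions yields the desired identity, and hence $\mathrm{o}(I) = \mathrm{o}(J) + 1$.

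The main (minor) obstacle is the bookkeeping step $\mu_\calR(J\calR) = \mu_R(J)$; the rest of the argument is a direct assembly of Propositions \ref{2.2}, \ref{2.4}, and \ref{2.4a} together with Theorem \ref{V2}.
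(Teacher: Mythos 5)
Your proof is correct and follows essentially the same route as the paper's: both compute the Cohen--Macaulay type of $\calR(I)_\fkM$ once via the maximal embedding dimension property from Theorem \ref{V2} (giving $\mu_R(I)-1$) and once via the canonical module $J\calR$ from Proposition \ref{2.4a} (giving $\mu_R(J)$), then translate through $\mu_R(-) = \mathrm{o}(-)+1$ using Propositions \ref{2.2} and \ref{2.4}(2). Your explicit verification that $J\calR/\fkM\cdot J\calR \cong J/\fkm J$ is a welcome detail that the paper delegates to a citation of Herzog--Kunz.
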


\begin{proof}
We may assume that $\rmo(I) >1$. We set $A =\calR_\fkM$ and $\n = \fkM\calR_\fkM$.  Let $n=\rmo(I)$. Then since $I$ is a contracted ideal and $I^2 = QI$, by Theorem \ref{V2} the Cohen-Macaulay local ring $A$ has maximal embedding dimesnion, that is 
$
\n^2= \q \n
$
for some parameter ideal $\q$ of $A$. Hence $\mu_A(\n) - 3 = \rmr (A)$ as $\dim A = 3$, while $\mu_A(\n) = \mu_R(I) + 2$ and $\rmr(A) = \mu_R(J)$ by Proposition \ref{2.4a} (see \cite[Korollar 6.11]{HK}). Thus  $\mathrm{o}(I) = \mathrm{o}(J) + 1$, because $\mu_R(I) = \mathrm{o}(I) + 1$ and $\mu_R(J) = \mathrm{o}(J) + 1$ by Propositions \ref{2.2} and \ref{2.4} (2).
\end{proof}

Closing this section, we note the following result, which shows the Rees algebras of certain contracted ideals of high order are always almost Gorenstein graded rings.

\begin{thm}\label{2.7}
Let $(R,\m)$ be a two-dimensional regular local ring with infinite residue class field and let $x,y$ be a regular system of parameters of $R$. Let $2 \le m \le n$ be integers and set $I = (x^m) + y^{n-m+1}\m^{m-1}$. Then $I$ is a stable contracted ideal with $\mathrm{o}(I) = m$ and $\calR(I)$ is an almost Gorenstein graded ring. If $n \ge 2m$, then $xy^{n-2}$ is integral over $I$ but $xy^{n-2} \not\in I$, so that $I$ is not integrally closed.
\end{thm}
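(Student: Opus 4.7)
My plan is to take $Q = (x^m, y^n)$ as a minimal reduction and apply Theorem~\ref{2.5} with the explicit choice
\[
f = x, \qquad g = x^m, \qquad h = y^{m-1}.
\]
To set things up, list the generators of $I$ as $x^m$ together with $h_k := x^{m-k} y^{n-m+k}$ for $k = 1, \ldots, m$. Every generator lies in $\m^m$ and $x^m \notin \m^{m+1}$, so $\rmo(I) = m$; since these $m+1$ monomials have pairwise distinct $(x,y)$-exponents and their images in $\mathrm{gr}_\m(R)$ are linearly independent, they form a minimal generating set, giving $\mu_R(I) = m+1 = \rmo(I)+1$. Proposition~\ref{2.2} then shows $I$ is contracted.

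Next I verify $I^2 = QI$ by checking pairwise products. The nontrivial products are $h_i h_j = x^{2m-s} y^{2n-2m+s}$ with $s = i + j \in [2, 2m]$. When $s \le m$, factor out $x^m$ and verify that $x^{m-s}y^{2n-2m+s} \in I$ via the membership criterion ``$x^a y^b \in I \iff a \ge m$ or $(b \ge n-m+1$ and $a+b \ge n)$'' (here $a+b = 2n - m \ge n$ and $b = 2n-2m+s \ge n-m+1$ since $n + s \ge m + 1$). When $s \ge m+1$, symmetrically factor out $y^n$ and observe the residue $x^{2m-s}y^{n-2m+s}$ satisfies $a+b = n$ and $b \ge n-m+1$. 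Having established stability, I compute
\[
J = Q : I = \bigcap_{k=1}^{m-1}\bigl((x^m, y^n):(x^{m-k}y^{n-m+k})\bigr) = \bigcap_{k=1}^{m-1} (x^k, y^{m-k}) = \m^{m-1},
\]
where the last equality is the pigeonhole observation that a monomial $x^i y^j$ lies in every $(x^k, y^{m-k})$ iff $i + j \ge m-1$.

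For Theorem~\ref{2.5} I then check the two required identities. The first, $\m J = fJ + \m h$, reads $\m^m = x\m^{m-1} + y^{m-1}\m$, which is immediate from enumerating the generating monomials of $\m^m$. For $IJ = gJ + Ih$, I observe
\[
gJ + Ih = x^m \m^{m-1} + y^{m-1}\bigl(x^m R + y^{n-m+1}\m^{m-1}\bigr) = x^m \m^{m-1} + y^n \m^{m-1} = Q\m^{m-1},
\]
while $IJ = x^m \m^{m-1} + y^{n-m+1}\m^{2m-2}$. The containment $gJ + Ih \subseteq IJ$ is clear. For the reverse, it suffices to show $y^{n-m+1}\m^{2m-2} \subseteq Q\m^{m-1}$: for a typical generator $x^i y^{n-m+1+j}$ with $i+j = 2m-2$, if $i \ge m$ then the remaining $x$- and $y$-exponents sum to $n-1 \ge m-1$, placing it in $x^m \m^{m-1}$; if $i < m$ then $j \ge m-1$, so $y^{n-m+1+j}$ is divisible by $y^n$ and the element lies in $y^n \m^{m-1}$.

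Finally, when $n \ge 2m$, $xy^{n-2}$ is integral over $I$ because
\[
(xy^{n-2})^m = x^m \cdot y^{m(n-2)} = x^m \cdot y^{(m-1)n} \cdot y^{n-2m} \in x^m \cdot (y^n)^{m-1} \cdot R \subseteq x^m \cdot I^{m-1} \subseteq I^m,
\]
using $y^n \in I$ and $n - 2m \ge 0$. That $xy^{n-2} \notin I$ follows from monomial divisibility: $x^m \nmid xy^{n-2}$, and $h_k \mid xy^{n-2}$ would force $m-k \le 1$ and $n-m+k \le n-2$ simultaneously, i.e.\ $k \ge m-1$ and $k \le m-2$. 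The principal bookkeeping obstacle I anticipate is the containment $y^{n-m+1}\m^{2m-2} \subseteq Q\m^{m-1}$ together with the parallel case split in the stability argument; both steps are elementary but hinge on handling $i \ge m$ versus $i < m$ attentively, and on exploiting the hypothesis $n \ge m$ to force the correct inequalities.
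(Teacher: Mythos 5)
Your proof is correct and takes essentially the same route as the paper: the same reduction $Q=(x^m,y^n)$, the computation $J=Q:I=\bigcap_{i=1}^{m-1}(x^{m-i},y^i)=\m^{m-1}$, and the application of Theorem \ref{2.5} with $f=x$, $g=x^m$, $h=y^{m-1}$, leading to the identities $\m J=xJ+\m y^{m-1}$ and $IJ=x^mJ+Iy^{m-1}$. The only difference is that you spell out the verifications of $I^2=QI$ and $\mu_R(I)=m+1$, which the paper asserts without proof.
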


\begin{proof} Since $\mu_R(I) = m+1$ and $\mathrm{o}(I) = m$, $I$ is a contracted ideal. We set $Q=(x^m, y^n)$ and $J = Q:I$. Then $Q \subseteq I$ and $I^2 = QI$. We have
$$
J = \bigcap_{i=1}^{m-1}\left[Q:x^i y^{n-i}\right] = \bigcap_{i=1}^{m-1} (x^{m-i}, y^i) = \m^{m-1}
$$
whence $\m J = \m y^{m-1} + x J$. Notice that
$$
IJ = y^n \m^{m-1} + x^m J \subseteq [(x^m) + y^{n-m+1}\m^{m-1}]y^{m-1} + x^m J = I y^{m-1} + x^m J
$$
and we get $IJ = I y^{m-1} + x^m J$. Thus  by Theorem \ref{2.5} $\calR(I)$ is an almost Gorenstein graded ring.
\end{proof}



\section{Proof of Theorem \ref{1.3}}

Let $(R, \m)$ be a regular local ring with $\dim R = 2$ and infinite residue class field. Let $I \subsetneq R$ be an $\m$-primary ideal of $R$. We choose a parameter ideal $Q=(a,b)$ of $R$ so that $Q \subseteq I$ and $I^2 = Q I$ and set $J=Q:I$. Suppose that $I$ is a contracted ideal with $\rmo(I)=2$. Our purpose is to prove Theorem \ref{1.3}.

By Propositions \ref{2.4} (2) and \ref{2.8}, $J$ is a contracted ideal with $\rmo(J) = 1$. Let us write 
$$
J=(f, g)
$$ 
with $f, g \in R$ and $f \not\in \m^2$. Since $Q=(a, b) \subseteq J=(f, g)$, we have the presentation
$$
\begin{pmatrix}
a \\
b
\end{pmatrix}
=
\begin{pmatrix}
\alpha & \beta \\
\gamma & \delta
\end{pmatrix}
\begin{pmatrix}
f \\
g
\end{pmatrix} \quad  (*)
$$
with $\alpha, \beta, \gamma, \delta \in R$. 
 Let  $$c = \det 
\begin{pmatrix}
\alpha & \beta \\
\gamma & \delta
\end{pmatrix} \quad  (**)
$$ and  we get $Q: c = J$ by \cite[Theorem 3.1]{H}, whence $I=Q+(c)=(a,b,c)$ as $I = Q:J$. Let
$$
\Bbb A={}^t
\begin{pmatrix}
g & \alpha & \gamma \\
-f & \beta & \delta
\end{pmatrix}.
$$
Then since $I$ is generated by the $2\times 2$ minors of the matrix $\Bbb A$, we get a minimal free resolution of $R/I$ of the form (\cite[Theorem 1.4.16]{BH})
$$
0 \to R^2 \overset{\Bbb A}{\longrightarrow} R^3 \overset{[\begin{smallmatrix}
c & -b &a\\ \end{smallmatrix}]}{\longrightarrow} R \to R/I \to 0.
$$

Let $S = R[T_1, T_2, T_3]$ be the polynomial ring and let 
$$
\varphi : S \to \calR = R[It]
$$
be the homomorphism of $R$-algebras defined by $\varphi(T_1) = ct$, $\varphi(T_2) = -bt$, and $\varphi(T_3) = at$, where $\calR = \calR(I)$  denotes the Rees algebra of $I$. We set 
$$
F_1 = g T_1 + \alpha T_2 + \gamma T_3 \ \ \text{and}\ \ F_2 = -f T_1 + \beta T_2 + \delta T_3.
$$ 
Then since $I^2 = QI$, by \cite[Theorem 4.1]{MP} we get
$$
\Ker \varphi =(F_1, F_2, F)
$$ 
for some $
F \in S_2
$, accounting the fact $c^2 \in QI$. On the other hand, since $\Ker \varphi$ is a perfect ideal of $S$ with grade $2$ and $\rmr(\calR) = 2$ by Proposition \ref{2.4a},  $\calR$ has a graded minimal $S$-free resolution of the following form 
$$
0 \to S(-2)\oplus S(-2) \overset{\Bbb{M}}{\longrightarrow} S(-2)\oplus S(-1) \oplus S(-1) \overset{[\begin{smallmatrix}
F&F_1&F_2\\
\end{smallmatrix}]}{\longrightarrow}  S \to \calR \to 0.
$$
Since $\rmK_S= S(-3)$, taking the $\rmK_S$-dual we get the following presentation of the graded canonical module $\rmK_{\calR}=[J\calR](-1)$ of $\calR$ (see Proposition \ref{2.4a}).

\begin{prop}\label{res}
$$
0 \to S(-3) \overset{{}^t[\begin{smallmatrix}
F&F_1&F_2\\
\end{smallmatrix}]}{\longrightarrow}  S(-1)\oplus S(-2) \oplus S(-2)  \overset{{}^t\Bbb{M}} {\longrightarrow} S(-1)\oplus S(-1) \overset{\sigma}{\longrightarrow} [J\calR](-1) \to 0.
$$
\end{prop}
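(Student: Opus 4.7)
The plan is to derive the claimed resolution of $[J\calR](-1) = \rmK_\calR$ by dualizing the minimal graded $S$-free resolution of $\calR$ displayed just before the statement. Since $R$ is two-dimensional regular local, the polynomial ring $S = R[T_1, T_2, T_3]$ is a five-dimensional Cohen-Macaulay graded ring whose graded canonical module is $\rmK_S = S(-3)$.

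I would first observe that $\Ker\varphi$ is a perfect ideal of $S$ of grade $2$: the displayed resolution has length $2$, so $\pd_S \calR \le 2$, while $\calR$ is Cohen-Macaulay of dimension $3$ in $\Spec S$ of dimension $5$, forcing $\grade(\Ker\varphi, S) = \dim S - \dim \calR = 2$. Hence $\pd_S \calR = \grade \Ker\varphi = 2$, which yields $\Ext^i_S(\calR, S) = 0$ for $i \ne 2$ and $\Ext^2_S(\calR, \rmK_S) \cong \rmK_\calR$.

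Next I would apply $\Hom_S(-, S(-3))$ to the resolution. Using the identification $\Hom_S(S(-k), S(-3)) = S(k-3)$, the dualized complex becomes
$$
0 \to S(-3) \xrightarrow{{}^t[F, F_1, F_2]} S(-1) \oplus S(-2)^{\oplus 2} \xrightarrow{{}^t\Bbb M} S(-1)^{\oplus 2} \to 0.
$$
Its $H^0$ is $\Hom_S(\calR, S(-3)) = 0$ (since $S$ is a domain and $\Ker\varphi \ne 0$), giving injectivity of the leftmost arrow; its $H^1 = \Ext^1_S(\calR, S(-3))$ vanishes by perfectness, giving exactness in the middle; and its $H^2$ is $\Ext^2_S(\calR, S(-3)) = \rmK_\calR$, which therefore appears as the cokernel of ${}^t\Bbb M$.

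Finally, Proposition \ref{2.4a} identifies $\rmK_\calR$ with $[J\calR](-1)$; the generators $f, g$ of $J$ supply the natural surjection $\sigma \colon S(-1)^{\oplus 2} \twoheadrightarrow [J\calR](-1)$ sending the standard basis to $ft, gt$ (shifted), and this may be taken as the rightmost map. The only minor subtlety is checking that this natural $\sigma$ agrees, up to the choice of a generator of $\rmK_\calR$, with the augmentation produced by duality; but both are surjections onto the same cyclic-presentation cokernel and the identification is forced by the uniqueness of the duality isomorphism. I do not anticipate any genuine obstacle: the whole argument is a direct application of the duality for perfect ideals, once Proposition \ref{2.4a} is in hand.
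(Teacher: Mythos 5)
Your argument is correct and is essentially the paper's own proof: the proposition is obtained precisely by applying $\Hom_S(-,\rmK_S)$ with $\rmK_S=S(-3)$ to the displayed minimal graded free resolution of $\calR$, using that $\Ker\varphi$ is perfect of grade $2$ so that the dualized complex is acyclic with cokernel $\Ext_S^2(\calR,\rmK_S)\cong\rmK_\calR=[J\calR](-1)$ by Proposition \ref{2.4a}. The one caveat is your final claim that $\sigma$ may be taken to send the standard basis to $ft,gt$: the map $\sigma$ is pinned down (up to a unit) as the cokernel map of the fixed matrix ${}^t\Bbb M$, so its values on the basis are a priori only \emph{some} generating pair of $J$ (this is exactly what the paper establishes later in Lemma \ref{3.1}); but since the statement does not specify $\sigma$, this does not affect the proposition.
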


Let us write
$$
{}^t \Bbb{M}
=
\begin{pmatrix}
\xi & \alpha_1 T_1 + \alpha_2 T_2 + \alpha_3 T_3  & \gamma_1 T_1 + \gamma_2 T_2 + \gamma_3 T_3 \\
\eta & \beta_1 T_1 + \beta_2 T_2 + \beta_3 T_3  & \delta_1 T_1 + \delta_2 T_2 + \delta_3 T_3 
\end{pmatrix}
$$
with $\xi, \eta, \alpha_i, \beta_i, \gamma_i, \delta_i \in R$.  We maintain this notation throughout this section.

\begin{lem}\label{3.1}
$J=(\xi, \eta)$.
\end{lem}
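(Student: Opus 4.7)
The plan is to extract the degree-$1$ graded piece of the exact sequence in Proposition~\ref{res}, thereby exhibiting the column $(\xi,\eta)^{\mathrm{T}}$ as a presentation matrix of $J$ as an $R$-module; invariance of Fitting ideals then immediately yields $J=(\xi,\eta)$.

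First, I would track graded degrees. Since $[S(-3)]_1=0$, $[S(-1)\oplus S(-2)\oplus S(-2)]_1\cong R$ (only the first summand contributes, as $[S(-2)]_1=S_{-1}=0$), $[S(-1)\oplus S(-1)]_1\cong R^{2}$, and $[[J\calR](-1)]_1=[J\calR]_0=J$, taking the degree-$1$ piece of the sequence in Proposition~\ref{res} yields a short exact sequence of $R$-modules
$$0\longrightarrow R\xrightarrow{(\xi,\eta)^{\mathrm{T}}} R^{2}\longrightarrow J\longrightarrow 0,$$
in which the left arrow is multiplication by the first column of ${}^{t}\Bbb{M}$; the other two columns of ${}^{t}\Bbb{M}$ have entries linear in $T_1,T_2,T_3$ and so contribute nothing in degree $1$.

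Next, I would write down the standard minimal free resolution of $J$. Since $J=(f,g)$ is an $\fkm$-primary ideal in the two-dimensional regular local ring $R$, the pair $f,g$ is a regular sequence, and the Koszul syzygy gives
$$0\longrightarrow R\xrightarrow{(-g,f)^{\mathrm{T}}} R^{2}\xrightarrow{(f,g)} J\longrightarrow 0.$$

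Finally, since the Fitting ideal $\operatorname{Fitt}_1(J)$ is intrinsic to the $R$-module $J$ (independent of the chosen presentation), comparing the two presentations above yields
$$(\xi,\eta)=\operatorname{Fitt}_1(J)=(f,g)=J,$$
as desired. The only genuinely technical point is the degree-$1$ identification of the left map in the first sequence with the column $(\xi,\eta)^{\mathrm{T}}$, which is immediate once degrees are tracked. Alternatively, one could invoke the uniqueness of the minimal free resolution of $J$ (both of the above sequences being minimal, since $\xi,\eta,f,g\in\fkm$) to conclude directly that $(\xi,\eta)^{\mathrm{T}}=uU(-g,f)^{\mathrm{T}}$ for some $u\in R^{\times}$ and $U\in GL_2(R)$, which again gives $(\xi,\eta)=J$.
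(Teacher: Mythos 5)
Your proof is correct, but it takes a genuinely different route from the paper's. You extract the degree-one strand of the dualized resolution in Proposition~\ref{res} --- which, since $[S(-3)]_1=0$ and $[S(-2)]_1=0$, collapses to a free resolution $0\to R\xrightarrow{{}^t(\xi,\eta)}R^2\to J\to 0$ --- and then conclude by invariance of Fitting ideals (or uniqueness of minimal resolutions) against the Koszul resolution of $J=(f,g)$; this is clean, and in fact only the presentation (right-exactness) is needed for the Fitting-ideal step. The paper instead proves the two inclusions separately: it applies Hilbert--Burch to $\Ker\varphi=(F_1,F_2,F)$ to express $F_1,F_2$ through the last two columns of ${}^t\Bbb{M}$, obtaining the explicit relations $(***)$ that give $(f,g)\subseteq(\xi,\eta)$, and then uses $\sigma\cdot{}^t\Bbb{M}=0$ together with the regularity of the sequence $f'=\sigma(\mathbf{e}_1)$, $g'=\sigma(\mathbf{e}_2)$ to get $(\xi,\eta)\subseteq J$. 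What your argument buys is brevity and conceptual transparency; what the paper's argument buys is the explicit equations $(***)$ relating $\alpha,\beta,\gamma,\delta$ to the entries $\alpha_i,\beta_i,\gamma_i,\delta_i$ of ${}^t\Bbb{M}$, which are not a by-product of your approach but are cited repeatedly afterwards (to see $\alpha,\beta,\gamma,\delta\in J$ before Lemma~\ref{abc}, and in the unit-determinant computations in the proof of Theorem~\ref{1.3}). So your proof fully establishes the lemma as stated, but if one adopted it the relations $(***)$ would still have to be derived separately for the later arguments.
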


\begin{proof}
The theorem of Hilbert-Burch (\cite[Theorem 1.4.16]{BH}) applied to the ideal $\Ker \varphi =(F_1, F_2, F)$ of $S$ shows 
\begin{eqnarray*}
F_1 &=& (-u) [\xi (\delta_1 T_1 + \delta_2 T_2 + \delta_3 T_3) - \eta (\gamma_1 T_1 + \gamma_2 T_2 + \gamma_3 T_3 )] \quad \text{and}\\
F_2 &=& u [\xi (\beta_1 T_1 + \beta_2 T_2 + \beta_3 T_3) - \eta (\alpha_1 T_1 + \alpha_2 T_2 + \alpha_3 T_3 )]
\end{eqnarray*}
for some unit $u$ of $R$. Therefore
$$
\begin{pmatrix}
g \\
\alpha \\
\gamma
\end{pmatrix}
=(-u)\left[\xi
\begin{pmatrix}
\delta_1 \\
\delta_2 \\
\delta_3
\end{pmatrix}
- \eta
\begin{pmatrix}
\gamma_1 \\
\gamma_2 \\
\gamma_3
\end{pmatrix}
\right]
, ~~
\begin{pmatrix}
-f \\
\beta \\
\delta
\end{pmatrix}
=u \left[\xi
\begin{pmatrix}
\beta_1 \\
\beta_2 \\
\beta_3
\end{pmatrix}
- \eta
\begin{pmatrix}
\alpha_1 \\
\alpha_2 \\
\alpha_3
\end{pmatrix}
\right]
 \quad (***)
$$
whence $J =(f,g) \subseteq (\xi, \eta)$. Let $\{{\bf e}_i\}_{i = 1,2}$ denote the standard basis of $S \oplus S$ and set $f' = \sigma({\bf e}_1)$ and $g' = \sigma({\bf e}_2)$, where $$S(-1)\oplus S(-1) \overset{\sigma}{\longrightarrow} [J\calR](-1)$$ denotes the homomorphism given in Proposition \ref{res}. Then we get $J=(f', g')$ and hence $f',g'$ forms an $R$-regular sequence.  Because $\sigma{\cdot} {}^t\Bbb M = 0$ in Proposition \ref{res}, we have
$$
\xi f' + \eta g' = 0
$$ 
whence
$$
\xi = g' z, ~~\eta = -f' z
$$
for some $z \in R$. Thus $(\xi, \eta) \subseteq (f', g') = J$, so that $J = (\xi,\eta)$. 
\end{proof}

We now make a more specific choice of the elements $a, b, g$, $\alpha, \beta, \gamma$, and $\delta$. 
Since $f \in J$ and $f \not\in \m^2$, writing $\m = (f,h)$ we have 
$$
J=(f, h^q)
$$
for some $q > 0$. Let $g=h^q$. If $J=\m$, then $I = Q :\m$ and  by \cite[Theorem 4.1]{GMTY1} $\calR$ is an  almost Gorenstein graded ring. In what follows, let us  suppose that $J \neq \m$; hence $q > 1$. We may assume that $a \in \m^2 \setminus \m^3$, since $\rmo(Q)= \rmo(I) = 2$ (because $I^2=QI$). 
 Notice that $\alpha, \beta, \gamma, \delta \in J$ (see equations $(***)$ in the proof of Lemma \ref{3.1}). We then have $a, b, c \in J^2$ by equations $(*)$ and $(**)$, so that $I \subseteq J^2$. Therefore since $a \in J^2 \setminus \m^3$, we may assume that the elements $a$ and $b$ have the form 
$$
a = f^2 + \varphi fg + \psi g^2 \ \ \text{and}\ \  b= \varphi' fg + \psi' g^2
$$
with $\varphi, \psi, \varphi', \psi' \in R$. Thus re-choosing $\alpha = f$, $\beta = \varphi f + \psi g$, $\gamma=0$, and $\delta= \varphi' f + \psi' g$, we get  assertion (1) in the following.

\begin{lem}\label{abc}
\begin{enumerate}[$(1)$]
\item 
$a = f^2 + \beta g, ~ b= \delta g, ~\text{and}~c=\delta f$.
Hence $\delta J \subseteq I$.
\item $
\m J = \m (f -g)+(f-h)J.
$
\end{enumerate}
\end{lem}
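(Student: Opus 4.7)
The plan is to prove both statements by direct, elementary manipulations that exploit the specific normalized form just set up for $a, b, c$ and the shape $J = (f, g) = (f, h^q)$ with $q \ge 2$.

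For part (1), I would simply substitute the re-chosen entries $\alpha = f$, $\beta = \varphi f + \psi g$, $\gamma = 0$, $\delta = \varphi' f + \psi' g$ into the presentation $(\ast)$ and into the determinant $(\ast\ast)$. The matrix equation yields $a = \alpha f + \beta g = f^2 + \beta g$ and $b = \gamma f + \delta g = \delta g$, while the determinant gives $c = \alpha \delta - \beta \gamma = f\delta$. From these three formulas the inclusion $\delta J = (\delta f, \delta g) = (c, b) \subseteq I = (a,b,c)$ is immediate. This part is essentially bookkeeping, so I expect no obstruction here.

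For part (2), let $N := \m(f-g) + (f-h)J$. The inclusion $N \subseteq \m J$ is clear: $f - g \in J$ forces $\m(f-g) \subseteq \m J$, and $f - h \in \m$ forces $(f-h)J \subseteq \m J$. For the reverse inclusion I would show that each of the four generators $f^2, fh, fg, gh$ of $\m J = (f,h)(f,g)$ lies in $N$. Observe that $f(f-g) = f^2 - fg \in \m(f-g)$ and $(f-h)f = f^2 - fh \in (f-h)J$ (since $f \in J$); subtracting gives $f(h - g) \in N$. The crucial point — and the only substantive step — is that $g = h^q$ with $q \ge 2$, so
\[
h - g = h(1 - h^{q-1}),
\]
and $1 - h^{q-1}$ is a unit of $R$ because $h \in \m$. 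Thus $fh \in N$, and then $f^2 = (f^2 - fh) + fh \in N$ and $fg = f^2 - (f^2 - fg) \in N$. Finally $h(f-g) = fh - gh \in \m(f-g)$ gives $gh = fh - (fh - gh) \in N$, completing the chain of generators.

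The only delicate point — and it is mild — is the unit argument at $h - g = h(1 - h^{q-1})$, which is exactly where the reduction to the case $J \ne \m$, i.e.\ $q > 1$, made earlier in the section is used. The remaining case $J = \m$ has already been handled by invoking \cite{GMTY1} before this lemma, so no further argument is needed.
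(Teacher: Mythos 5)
Your proof is correct and follows essentially the same route as the paper's: part (1) is the same substitution of $\alpha=f$, $\gamma=0$ into $(\ast)$ and $(\ast\ast)$, and part (2) hinges on the identical key step of producing $fh(1-h^{q-1})$ inside $\m(f-g)+(f-h)J$ and inverting the unit $1-h^{q-1}$, which is exactly where the standing reduction to $q>1$ is used. The only cosmetic difference is which pair of generators you combine to reach that element (you use $f(f-g)$ and $f(f-h)$, the paper uses $h(f-g)$ and $g(f-h)$), so there is nothing further to add.
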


\begin{proof}(2) Let $\fka = \m (f -g)+(f-h)J$. Since
$$
fh(1-h^{q-1}) = (fh-h^{q+1}) - (fh^q - h^{q+1}) \in \fka,
$$
we have $fh \in \fka$. Therefore $\fka = \m J$ because $\fka + (fh) = \m J$.
\end{proof}

\begin{prop}\label{3.4}
Suppose that $\beta \in \m J$. Then $\calR$ is an almost Gorenstein graded ring.\end{prop}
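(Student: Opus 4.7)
The plan is to verify condition (2) of Theorem \ref{2.5} with the triple
$$
f' = f - h \in \m, \quad h' = f - g \in J, \quad g' = a \in I.
$$
The identity $\m J = f'J + \m h'$ is precisely Lemma \ref{abc}(2), so the entire argument reduces to proving $IJ = aJ + I(f-g)$; I denote this candidate ideal by $J'$.

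The key algebraic input will be the identity $a\delta = cf + b\beta$, obtained immediately from $a = f^2 + \beta g$ by multiplying by $\delta$ and invoking $f\delta = c$, $g\delta = b$ from Lemma \ref{abc}(1). Since $\delta \in J$, the term $a\delta$ already lies in $aJ \subseteq J'$, so working modulo $J'$ this reads $cf \equiv -b\beta$. Writing the hypothesis as $\beta = \mu f + \nu g$ with $\mu, \nu \in \m$, one obtains $cf \equiv -\mu\, bf - \nu\, bg \pmod{J'}$.

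The remaining step is to compare this with the easy congruences coming from $I(f-g) \subseteq J'$ and the identity $cg = \delta fg = bf$: these give $bf \equiv bg$ and $cf \equiv cg = bf$ modulo $J'$. Substituting yields $(1+\mu+\nu)\,bg \equiv 0 \pmod{J'}$, and since $\mu, \nu \in \m$ the factor $1+\mu+\nu$ is a unit in $R$, forcing $bg \in J'$. Then $cg = bf \equiv bg \in J'$ and $ag \in aJ \subseteq J'$ give $Ig \subseteq J'$, and together with the obvious $IJ = Ig + I(f-g)$ this yields $IJ = J' = aJ + I(f-g)$. Theorem \ref{2.5} then concludes that $\calR$ is an almost Gorenstein graded ring.

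The only real subtlety is recognizing that the hypothesis $\beta \in \m J$ is exactly what makes $1+\mu+\nu$ a unit at the decisive step; without it the cancellation producing $bg \in J'$ would fail, and one would be unable to trap $Ig$ inside $aJ + I(f-g)$.
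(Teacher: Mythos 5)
Your proof is correct and follows essentially the same route as the paper: the same candidate decomposition $IJ = aJ + I(f-g)$ combined with Lemma \ref{abc} (2) for $\m J$, and the same unit-cancellation argument obtained by multiplying $a = f^2 + \beta g$ by $\delta$. The only (harmless) difference is that you expand $\beta = \mu f + \nu g$ with $\mu,\nu \in \m$ directly, whereas the paper writes $\delta\beta = ax+by+cz \in \m I$; both lead to $(1+\varepsilon)\,bg \in aJ + I(f-g)$ with $\varepsilon \in \m$ and hence to $bg=cf=bf=cg$ lying in that ideal.
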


\begin{proof} We set $\fka = I(f-g)+aJ$. It suffices by Theorem \ref{2.5} to show that $IJ = \fka$. Since $IJ = \fka + (bf, cg)$, we have only to check that $bf, cg \in \fka$. Because $\beta \in \m J$ and $\delta J \subseteq I$ by Lemma \ref{abc} (1), we may write $\delta \beta = a x + b y + cz $ with $x, y, z \in \m$. Hence 
$$
\delta \beta g \equiv bg y + c g z \equiv \delta fg (y+z)  \ \mod \ \fka
$$
because  $bf \equiv b g\ \mod \ \fka$, $b = \delta g$, and $c =\delta f$ by Lemma \ref{abc} (1). 
On the other hand, since $a = f^2 + \beta g$, we have 
$$
\delta \beta g \equiv -\delta f^2 \equiv -\delta fg \ \mod \ \fka
$$
because $\delta f^2 = cf$ and $cg= \delta fg$.
Therefore
$$
\delta fg (1 + y + z) \in \fka,
$$
whence $\delta fg = cg = bf \in \fka$. Thus $IJ = \fka$.
\end{proof}

Let $\overline{I}$ denote the integral closure of $I$. Then $\overline{I}$ is a contracted ideal and $\overline{I}^2 = Q\overline{I}$. Let $K=Q : \overline{I}$. Then $\rmo(K) = 1$ by Proposition \ref{2.8}. Let us choose an element $\zeta \in K \setminus \m^2$. Then since $K = Q : \overline{I} \subseteq Q:I =J$, we have
$$
\zeta = x f + y h^q
$$
with $x, y \in R$, where $x \notin \m$ because $\zeta \not\in \m^2$. Therefore we have 
$$
\m = (\zeta, h), ~~J=(\zeta, h^q), \ \ \text{and}\ \ ~~K=(\zeta, h^{\ell}) \ \ \text{with} \ \ \ell \ge q.
$$
Hence without loss of generality we may assume that $f = \zeta \in K$. Let  $n=\ell -q$. Then taking $g' = h^\ell$, by Lemma \ref{abc} (1) we may furthermore assume that the elements $a$ and $b$ have the form 
$$
a = f^2 + \beta' g'\ \ \text{and}\ \ b= \delta' g'
$$
where $\beta', \delta' \in R$ such that $\beta'h^n = \beta, \delta' h^n = \delta$.  We set $c' = \delta' f$. Hence $\overline{I} = Q + (c') = (a,b,c')$ and we have  the following.

\begin{prop}\label{length}
$\ell_R(\overline{I}/I) = n$.
\end{prop}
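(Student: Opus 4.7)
The plan is to compute $\ell_R(\overline{I}/I)$ by reducing to two cyclic length computations, exploiting the fact that both $I$ and $\overline{I}$ differ from $Q$ by exactly one generator. From the short exact sequence
$$
0 \to I/Q \to \overline{I}/Q \to \overline{I}/I \to 0
$$
we get $\ell_R(\overline{I}/I) = \ell_R(\overline{I}/Q) - \ell_R(I/Q)$, so it suffices to identify the two summands on the right.

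Since $I = Q + (c)$, for any $r \in R$ the condition $rI \subseteq Q$ is equivalent to $rc \in Q$, so $Q:c = Q:I = J$; applying the same reasoning with $\overline{I} = Q + (c')$ gives $Q:c' = Q:\overline{I} = K$. Hence as cyclic $R$-modules,
$$
I/Q \;\cong\; R/J \quad \text{and} \quad \overline{I}/Q \;\cong\; R/K.
$$
This reduces the problem to the computation of $\ell_R(R/J)$ and $\ell_R(R/K)$.

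Finally, using $J = (f, h^q)$ and $K = (f, h^\ell)$ from the normalization done just before the statement, together with the fact that $f, h$ is a regular system of parameters of $R$: the sequences $f, h^q$ and $f, h^\ell$ are $R$-regular, and passing to the DVR $R/(f)$ (with uniformizer the image of $h$) one sees that $\ell_R(R/(f,h^q))=q$ and $\ell_R(R/(f,h^\ell))=\ell$. Substituting back yields
$$
\ell_R(\overline{I}/I) \;=\; \ell - q \;=\; n,
$$
as desired. There is no real obstacle here; this is a clean bookkeeping argument once the explicit descriptions of $J$ and $K$ (and the identification of $I$, $\overline{I}$ modulo $Q$) assembled in the excerpt are in place.
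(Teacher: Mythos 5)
Your proof is correct, and it takes a genuinely different route from the paper's. The paper works directly with $\overline{I}/I$: it observes that $(f,h^n)\overline{I}\subseteq I$ (using $h^nc'=c$ and $fc'=\delta'a-\beta'b\in Q$), so $\overline{I}/I$ is cyclic over $R/(f,h^n)$ generated by the image of $c'$, and then rules out $h^ic'\in I$ for $0\le i<n$ by a unit-adjustment argument reducing to $h^i\in Q:c'=K=(f,h^\ell)$; this identifies the module itself, $\overline{I}/I\cong R/(f,h^n)$, from which the length is read off. You instead use additivity of length along $Q\subseteq I\subseteq\overline{I}$ together with the cyclic presentations $I/Q\cong R/(Q:c)=R/J$ and $\overline{I}/Q\cong R/(Q:c')=R/K$, which are immediate from $I=Q+(c)$ and $\overline{I}=Q+(c')$ as established just before the statement, and then compute $\ell_R(R/J)=q$ and $\ell_R(R/K)=\ell$ in the DVR $R/(f)$. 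Your argument is shorter and avoids the explicit generator manipulations; what it gives up is the finer structural conclusion $\overline{I}/I\cong R/(f,h^n)$, though only the length (in fact only the equivalence $n=0\Leftrightarrow I=\overline{I}$) is used later in the proof of Theorem 1.3, so your version fully suffices for the paper's purposes.
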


\begin{proof}
We have $(f, h^n) \overline{I} \subseteq I$, since $h^n c' = \delta f = c$. Hence  $\overline{I}/I$ is a cyclic module over $R/(f,h^n)$ generated by the image of $c'$.  Let $0 \le i < n$ be an integer and suppose that 
$
h^i c' \in I = Q+(h^nc').
$
Then $h^ic' \in Q$, since $h^ic'(1 - x h^{n-i}) \in Q$ for some $x \in R$. Therefore $h^i \in Q : c' = K = (f, h^{\ell})$, which is impossible because $f,h$ is a regular system of parameters of $R$ and $0 \le i < n= \ell -q < \ell$. Thus $\overline{I}/I \cong R/(f,h^n)$ and hence $\ell_R(\overline{I}/I)= n$.
\end{proof}

To prove Theorem \ref{1.3} we need the following general result.

\begin{lem}\label{o(I)=2} Let $\fka$ be an arbitray $\m$-primary ideal of $R$. Assume that $\fka$ contains a parameter ideal $\q = (a,b)$ of $R$ as a reduction. If $\rmo(\fka) \le 2$, then $\fka^2 = \q \fka$.
\end{lem}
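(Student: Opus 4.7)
The plan is to use the integral closure $\overline{\fka}$ as a bridge, leveraging the classical structure theory of integrally closed $\m$-primary ideals in a two-dimensional regular local ring. Since $\q$ is a reduction of $\fka$ one has $\overline{\q} = \overline{\fka}$; since $\m^n$ is integrally closed in $R$ one gets $\rmo(\overline{\fka}) = \rmo(\fka) \le 2$; and since $\overline{\fka}$ is integrally closed, Zariski's theorem says $\overline{\fka}$ is contracted, so Proposition~\ref{2.2} yields $\mu_R(\overline{\fka}) = \rmo(\overline{\fka}) + 1 \le 3$.

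If $\mu_R(\overline{\fka}) \le 2$, then $\overline{\fka}$ is itself a parameter ideal and the chain
\[
\ell_R(R/\q) = \mathrm{e}_0(\q) = \mathrm{e}_0(\fka) = \mathrm{e}_0(\overline{\fka}) = \ell_R(R/\overline{\fka})
\]
combined with $\q \subseteq \overline{\fka}$ forces $\q = \overline{\fka}$, whence $\fka = \q$ and the conclusion is trivial. In the substantive case $\mu_R(\overline{\fka}) = 3$, I would write $\overline{\fka} = \q + Rc$ for some $c \in \overline{\fka}$ and invoke the classical equality $\overline{\fka}^2 = \q\,\overline{\fka}$, valid for any integrally closed $\m$-primary ideal paired with a minimal reduction in a two-dimensional regular local ring (Lipman-Teissier; equivalently, the Rees algebra of a complete ideal is Cohen-Macaulay). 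This gives $c^2 \in \overline{\fka}^2 = \q^2 + \q c$, so $c^2 = \gamma + \delta c$ with $\gamma \in \q^2$ and $\delta \in \q$.

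Finally, the inclusion $\fka/\q \subseteq \overline{\fka}/\q \cong R/(\q :_R c)$ produces an ideal $J \supseteq \q :_R c$ of $R$ with $\fka = \q + J c$. Expanding,
\[
\fka^2 = \q^2 + \q J c + J^2 c^2 = \q\fka + J^2 c^2,
\]
and substituting the quadratic relation yields $J^2 c^2 = J^2 \gamma + J^2 \delta c \subseteq \q^2 + \q \cdot J c \subseteq \q\fka$. Hence $\fka^2 \subseteq \q\fka$, and the reverse containment is automatic. The main obstacle I expect is securing the stability equality $\overline{\fka}^2 = \q\,\overline{\fka}$ for integrally closed ideals in dimension two regular local rings; once this classical input is in hand, everything else is a short, formal propagation of the quadratic relation for $c$ through the presentation $\fka = \q + Jc$.
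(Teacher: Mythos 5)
Your proof is correct, and it takes a genuinely different route from the paper's. Both arguments pass through the integral closure and rest on the same classical inputs: $\overline{\fka}$ is contracted, so $\mu_R(\overline{\fka})=\rmo(\overline{\fka})+1\le 3$, and $\overline{\fka}^2=\q\,\overline{\fka}$. The paper, however, then works in explicit coordinates: via Proposition~\ref{2.8} and Lemma~\ref{abc} it chooses a regular system of parameters $f,h$ with $\q:\overline{\fka}=(f,h^\ell)$ and $\q:\fka=(f,h^q)$, normalizes $a=f^2+\alpha h^\ell$ and $b=\beta h^\ell$, identifies the third generators $c_1=\beta f$ of $\overline{\fka}$ and $c=c_1h^n$ of $\fka$ (so that $h^n\overline{\fka}\subseteq\fka$), and transports the relation $c_1^2\in\q\,\overline{\fka}$ to $c^2\in\q\fka$ by multiplying by $h^{2n}$. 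You replace all of this by the coordinate-free observation that $\fka$, being squeezed between $\q$ and the almost complete intersection $\overline{\fka}=\q+(c)$, must have the form $\q+Jc$ for some ideal $J\supseteq\q:c$, after which the quadratic relation $c^2=\gamma+\delta c$ with $\gamma\in\q^2$, $\delta\in\q$ propagates formally to give $\fka^2\subseteq\q\fka$. Your version is shorter, avoids the Hilbert--Burch and determinantal machinery of Section~3, and isolates exactly where the hypothesis $\rmo(\fka)\le 2$ enters, namely in forcing $\overline{\fka}/\q$ to be cyclic. The only step you leave implicit is that $a,b$ form part of a minimal generating set of $\overline{\fka}$, so that $\overline{\fka}=\q+Rc$ when $\mu_R(\overline{\fka})=3$; this is standard (if $a,b$ were linearly dependent modulo $\m\overline{\fka}$, Nakayama's lemma would make a single element a reduction of $\overline{\fka}$, contradicting the fact that an $\m$-primary ideal of the two-dimensional ring $R$ has analytic spread $2$), but it deserves a sentence.
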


\begin{proof}
Notice that $\rmo(\q)=\rmo(\fka)$. If $\rmo(\fka) = 1$, $\fka$ is generated by two elements, whence $\q= \fka$. Suppose that $\rmo(\fka)=2$ and consider the integral closure $\overline{\fka}$ of $\fka$. Then $\overline{\fka}^2 =\q \overline{\fka}$. Let $K = \q:\overline{\fka}$ and $J = \q:\fka$. Then $\rmo(K)=1$ by Proposition \ref{2.8} because $\overline{\fka}$ is contracted, so that $K = (f,h^\ell)$ and $J = (f,h^q)$ for some regular system $f,h$ of parameters of $R$, where $\ell \ge q$, $\ell > 0$, and $q \ge 0$. We may assume $q > 0$, because $\q=\fka$ if $q = 0$. Remember that after re-choosing $a, b$, we may assume that 
\begin{eqnarray*}
a &=& f^2 + \alpha h^\ell, \\
b &=&\beta h^\ell
\end{eqnarray*}
for some $\alpha, \beta \in R$ (Lemma \ref{abc}). 
Therefore since 
$\left(\begin{smallmatrix}
a\\
b
\end{smallmatrix}\right)
=\left(\begin{smallmatrix}
f&\alpha\\
0&\beta
\end{smallmatrix}\right)
\left(\begin{smallmatrix}
f\\
h^\ell
\end{smallmatrix}\right),$
setting $c_1 = \beta f$, we get $\overline{\fka} = (a,b,c_1)$. On the other hand,
because 
$\left(\begin{smallmatrix}
a\\
b
\end{smallmatrix}\right)
=\left(\begin{smallmatrix}
f&\alpha h^n\\
0&\beta h^n
\end{smallmatrix}\right)
\left(\begin{smallmatrix}
f\\
h^q
\end{smallmatrix}\right)$
where $n = \ell - q~~(\ge 0)$, setting $c = c_1h^n$, we have $\fka = (a,b,c)$. Hence  $h^n\overline{\fka} \subseteq \fka$. Let us now write $${c_1}^2 = a \varphi + b \psi$$ with $\varphi, \psi \in \overline{\fka}$. Then $c^2 = ({c_1h^n})^2 = a(h^{2n}\varphi)+b(h^{2n}\psi) \in (a,b)\fka$, whence $\fka^2 = \q \fka$ as claimed.
\end{proof}

We are now ready to complete the proof of Theorem \ref{1.3}.

\begin{proof}[Proof of Theorem \ref{1.3}] We may assume that $\mathrm{o}(I) = 2$. Hence $I^2=QI$ by Lemma  \ref{o(I)=2}. We maintain the same notation as above. By \cite[Theorem 4.1]{GMTY1} and Proposition \ref{3.4} we may assume that $J \neq \m$ and $\beta \not\in \m J$. Let us write $\beta = \varepsilon f + \rho g$ with $\varepsilon, \rho \in R$. Suppose  that $\varepsilon \not\in \m$. If $n >0$, then since $\beta = \beta' h^n \in (h)$ and $\rho g = \rho (g' h^n) \in (h)$, we get $f \in (h)$. This is impossible since $\m =(f,h)$. Thus $n=0$ and $\overline{I} = I$ by Proposition \ref{length}, so that $\calR$ is an almost Gorenstein graded ring by \cite[Theorem 1.3]{GMTY1}.

Suppose that $\varepsilon \in \m$. Then $\rho \not\in \m$ as $\beta \not\in \m J$. Therefore $$J = (f,g) \subseteq (f,\beta) \subseteq J$$ since $g \in (f,\beta)$. Hence  $J = (f,\beta)=(\xi,\eta)$. Consequently, $\det \left(\begin{smallmatrix}
\alpha_1 & \beta_1 \\
\alpha_2 & \beta_2
\end{smallmatrix}\right) =\alpha_1\beta_2 - \alpha_2\beta_1$ is a unit of $R$, because  
$$
\begin{pmatrix}
-f \\
\beta
\end{pmatrix}
=u
\begin{pmatrix}
\alpha_1 & \beta_1 \\
\alpha_2 & \beta_2
\end{pmatrix}
\begin{pmatrix}
-\eta \\
\xi
\end{pmatrix}
$$
for some unit $u$ of $R$ (see equations $(***)$). On the other hand,   by Lemma \ref{3.1} $\det \left(\begin{smallmatrix}
\delta_1 & -\gamma_1 \\
-\beta_1 & \alpha_1
\end{smallmatrix}\right)=\alpha_1 \delta_1 - \beta_1 \gamma_1$ is a unit of $R$, because $$
\begin{pmatrix}
g \\
-f
\end{pmatrix}
=(-u)
\begin{pmatrix}
\delta_1 & -\gamma_1 \\
-\beta_1 & \alpha_1
\end{pmatrix}
\begin{pmatrix}
\xi \\
\eta
\end{pmatrix}
$$
by equations $(***)$. We are interested in the form of matrices equivalent to ${}^t\Bbb M$. We write 
$$
(-u)
\begin{pmatrix}
\delta_1 & -\gamma_1 \\
-\beta_1 & \alpha_1
\end{pmatrix}
{}^t {\Bbb M}
=
\begin{pmatrix}
g & G_1 & G_2\\
-f & u H_1 & u H_2
\end{pmatrix}
$$
where
\begin{eqnarray*}
H_1 &=&  (\alpha_2 \beta_1 - \alpha_1 \beta_2)T_2 + (\alpha_3 \beta_1 -\alpha_1 \beta_3)T_3 \ \  \text{and} \\
H_2 &=&  (\beta_1 \gamma_1 - \alpha_1 \delta_1)T_1 + (\beta_1 \gamma_2 -\alpha_1 \delta_2)T_2 + (\beta_1 \gamma_3 - \alpha_1 \delta_3)T_3.
\end{eqnarray*}
Since $\alpha_2 \beta_1 - \alpha_1 \beta_2 \notin \m$ and $\beta_1 \gamma_1 - \alpha_1 \delta_1 \not\in \m$, after elementary column operations with coefficients in $R$ on the matrix  
$
\left(\begin{smallmatrix}
g & G_1 & G_2\\
-f & u H_1 & u H_2
\end{smallmatrix}\right)
$, we may assume that $uH_1 \equiv T_2 \ \mod \ (T_3)$ and that $uH_2 \equiv T_1 \ \mod \ (T_2, T_3)$.   Hence the matrix 
$
\left(\begin{smallmatrix}
g & G_1 & G_2\\
-f & u H_1 & u H_2
\end{smallmatrix}\right)$
is equivalent to a matrix of the form
$
\left(\begin{smallmatrix}
-g& m_1 & m_2 \\
f & \ell_1 & \ell_2
\end{smallmatrix}\right)
$
where $m_1, m_2, \ell_1, \ell_2$ are linear forms in $S$ such that $(f, \ell_1, \ell_2, T_3) = (f,T_1, T_2, T_3)$. Therefore by Proposition \ref{res} $$\rmK_{\calR}/\calR{\cdot}\theta \cong \left[S/(f,\ell_1,\ell_2)\right](-1)$$ for some $\theta \in \left[\rmK_{\calR}\right]_1$, whence by Definition \ref{1.2} $\calR$ is  an almost Gorenstein graded ring. This completes the proof of Theorem \ref{1.3}.
\end{proof}


\section{Proof of Theorem \ref{1.4}}
The purpose of this section is to prove Theorem \ref{1.4}.

Let $(R, \m)$ be a two-dimensional regular local ring with infinite residue class field. Let $x,y$ be a regular system of parameters of $R$. Let $n \ge 3$ be an integer and put $Q=(x^3, y^n)$. We choose integers $\alpha$, $\beta$ so that $0 < \alpha < \beta < n$ and set 
$$
I=(x^3, x^2y^{\alpha}, xy^{\beta}, y^n) \ ~~ \ \text{and}~~\  J=(x^2, xy^{n-\beta}, y^{n-\alpha}).
$$
Then $I$ and $J$ are contracted ideals of $R$  and
$$
J=Q:I, ~~I:\m=I:y,~~\ \text{and}~~\ J:\m = J:y.
$$
We have  $\mu_R(IJ)= \mathrm{o}(IJ) + 1 =6$ and $\mu_R(\m J) = \mathrm{o}(\m J) +1 =4$. If $\beta + 2 \le 2\alpha$, then $x^2y^{\alpha -1}$ is integral over $I$ but $x^2y^{\alpha -1} \not\in I$, so that $I$ is not integrally closed.

A direct computation shows the following.

\begin{prop}
$I^2=QI$ if and only if $\beta \le 2 \alpha$, $n \le \alpha + \beta$, and $n + \alpha \le 2\beta$.
\end{prop}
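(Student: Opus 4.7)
The plan is to reduce the claim to a direct monomial-divisibility check. Expanding the pairwise products of generators of $I$ gives
\[
I^2=(x^6,\,x^5y^\alpha,\,x^4y^\beta,\,x^3y^n,\,x^4y^{2\alpha},\,x^3y^{\alpha+\beta},\,x^2y^{n+\alpha},\,x^2y^{2\beta},\,xy^{n+\beta},\,y^{2n}),
\]
while
\[
QI \;=\; x^3I + y^nI \;=\; (x^6,\,x^5y^\alpha,\,x^4y^\beta,\,x^3y^n,\,x^2y^{n+\alpha},\,xy^{n+\beta},\,y^{2n}).
\]
The containment $QI\subseteq I^2$ is automatic from $Q\subseteq I$, so the question reduces to deciding when the three ``remaining'' monomials $x^4y^{2\alpha}$, $x^3y^{\alpha+\beta}$, $x^2y^{2\beta}$ of $I^2$ all belong to $QI$.

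To turn this into combinatorics I would invoke the standard principle that a monomial in $x,y$ belongs to a monomial ideal of $R$ if and only if some monomial generator of that ideal divides it. Since all ideals in sight are $\m$-primary, one may pass to $\widehat R\cong (R/\m)[[x,y]]$ and use the division algorithm in a two-variable formal power series ring; equivalently, take initial forms in $\gr_\m R\cong (R/\m)[X,Y]$.

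It then remains to test each of the three target monomials against the seven generators of $QI$, using $0<\alpha<\beta<n$ to eliminate all but one candidate. For $x^4y^{2\alpha}$ the generators $x^2y^{n+\alpha}$, $xy^{n+\beta}$, $y^{2n}$ have $y$-exponent exceeding $2\alpha$ under $n>\beta>\alpha$, while $x^3y^n$ would require $n\le 2\alpha$ (which already forces $\beta<n\le 2\alpha$), and $x^4y^\beta$ divides iff $\beta\le 2\alpha$; hence membership is equivalent to $\beta\le 2\alpha$. For $x^3y^{\alpha+\beta}$ an analogous case check leaves $x^3y^n$ as the only viable divisor, yielding $n\le\alpha+\beta$. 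For $x^2y^{2\beta}$ only $x^2y^{n+\alpha}$ remains viable, yielding $n+\alpha\le 2\beta$. Conjoining the three conditions gives the stated equivalence.

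No substantive obstacle is anticipated; the argument is finite combinatorial bookkeeping, and the only conceptual step is the clean passage from ideal membership in $R$ to monomial divisibility, which the completion/initial-form argument handles.
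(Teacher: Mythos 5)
Your proof is correct and follows essentially the same route as the paper's: both reduce the question to whether the three extra generators $x^4y^{2\alpha}$, $x^3y^{\alpha+\beta}$, $x^2y^{2\beta}$ of $I^2=QI+(x^2y^\alpha,xy^\beta)^2$ lie in the monomial ideal $QI$, and then read off the three inequalities by divisibility (the paper leaves this last step implicit). One small caution: in mixed characteristic $\widehat{R}$ need not be $(R/\m)[[x,y]]$, so justify the divisibility criterion via initial forms in $\gr_\m R\cong (R/\m)[X,Y]$ or via the standard fact that monomials in a regular sequence satisfy the expected membership/colon relations, rather than via the completion.
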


We then have the following. The simplest example satisfying the conditions in Theorem \ref{4.2} which are not integrally closed is the ideal $I = (x^3, x^2y^3, xy^4,y^5)$.

\begin{thm}\label{4.2}
Suppose that $I^2 = QI$ and $n + \alpha = 2 \beta$. Then $\calR(I)$ is an almost Gorenstein graded ring.
\end{thm}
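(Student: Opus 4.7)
My plan is to apply Theorem \ref{2.5}, which reduces the claim to exhibiting $f\in\mathfrak{m}$, $g\in I$, and $h\in J$ with $IJ=gJ+Ih$ and $\mathfrak{m}J=fJ+\mathfrak{m}h$. The candidate triple I would try is
\[
f=y,\qquad g=x^3+y^n\in I,\qquad h=x^2+y^{n-\alpha}\in J,
\]
where $J=(x^2,xy^{n-\beta},y^{n-\alpha})$. The verification of $\mathfrak{m}J=fJ+\mathfrak{m}h$ should be routine bookkeeping: writing $\mathfrak{m}J=(x^3,x^2y,xy^{n-\beta+1},y^{n-\alpha+1})$, $yJ=(x^2y,xy^{n-\beta+1},y^{n-\alpha+1})$, and $\mathfrak{m}h=(x^3+xy^{n-\alpha},\,x^2y+y^{n-\alpha+1})$, the hypothesis $\alpha<\beta$ forces $xy^{n-\alpha}\in(xy^{n-\beta+1})\subseteq yJ$. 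Thus the first generator of $\mathfrak{m}h$ contributes $x^3$ modulo $yJ$ while the second is absorbed, yielding equality.

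For the harder identity $IJ=gJ+Ih$, I would label the six minimal generators of $IJ$ as
\[
m_1=x^5,\ m_2=x^4y^{n-\beta},\ m_3=x^3y^{n-\alpha},\ m_4=x^2y^n,\ m_5=xy^{2n-\beta},\ m_6=y^{2n-\alpha},
\]
and set $s:=\alpha+\beta-n\ge 0$. The balance $n+\alpha=2\beta$ is used crucially to identify $x^3y^{n-\beta+\alpha}=x^3y^\beta=y^s m_3$ and $xy^{n-\alpha+\beta}=xy^{2n-\beta}=m_5$. A direct expansion of the three generators of $gJ$ and the four generators of $Ih$ then gives $gJ=(A_1,A_2,A_3)$ and $Ih=(B_1,B_2,B_3,B_4)$ with
\[
A_1=m_1+m_4,\quad A_2=m_2+m_5,\quad A_3=m_3+m_6,
\]
\[
B_1=m_1+m_3,\quad B_2=y^s m_2+m_4,\quad B_3=y^s m_3+m_5,\quad B_4=m_4+m_6.
\]

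The hard step, which I expect to be the main obstacle, is to recover each $m_i$ individually from these seven elements. Setting $p:=A_1-B_2=m_1-y^s m_2$ and $q:=B_3-A_2=y^s m_3-m_2$, one computes $B_1-p=m_3+y^s m_2$ and combines
\[
y^s(B_1-p)-q=(1+y^{2s})\,m_2\in gJ+Ih.
\]
When $s\ge 1$ one has $y^{2s}\in\mathfrak{m}$, so $1+y^{2s}$ is a unit in the local ring $R$, giving $m_2\in gJ+Ih$; then $m_3,m_5,m_6,m_1,m_4$ follow successively from $B_1-p$, $A_2$, $A_3$, $B_1$, and $A_1$. The opposite inclusion $gJ+Ih\subseteq IJ$ is immediate. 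The remaining case $s=0$ happens exactly when $\beta=2\alpha$ (whence $n=3\alpha$), in which $I=(x,y^\alpha)^3$ is a power of a simple complete ideal, hence integrally closed by Zariski's theorem, and the conclusion in this case follows at once from the main result of \cite{GMTY1}.
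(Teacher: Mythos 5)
Your proof is correct and follows essentially the same route as the paper: both verify the two equalities of Theorem \ref{2.5} with $f=y$ and with $g,h$ built from $x^3$, $x^2$, and $y^{n-\alpha}$. The paper's choice $g=x^3$ and $h=x^2-y^{n-\alpha}$ makes the telescoping immediate (since $x^3J$ already yields $m_1,m_2,m_3$ and the sign turns your $(1+y^{2s})$ into a clean cancellation), so it avoids the unit issue and the separate treatment of the case $s=0$ that your choice $g=x^3+y^n$, $h=x^2+y^{n-\alpha}$ forces on you.
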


\begin{proof}
We have $\alpha \ge n- \beta$, $\beta \ge n- \alpha$, and $2n - \beta = n - \alpha + \beta$, because $I^2 =QI$ and $n + \alpha= 2 \beta$. Therefore
\begin{eqnarray*}
IJ &=& (x^5, x^4y^{n-\beta}, x^3y^{n-\alpha}, x^2y^n, xy^{n-\alpha + \beta}, y^{2n - \alpha}) \\ 
&=& I(x^2-y^{n-\alpha}) + x^3 J.
\end{eqnarray*}
On the other hand, because $n -\alpha \ge n- \beta + 1$ we get
\begin{eqnarray*}
\m J &=& (x^3, x^2y, xy^{n-\beta + 1}, y^{n - \alpha + 1}) \\ 
&=& \m (x^2-y^{n-\alpha}) + y J.
\end{eqnarray*}
Hence $\calR(I)$ is an almost Gorenstein graded ring by Theorem \ref{2.5}.
\end{proof}

We are now in a position to prove Theorem \ref{1.4}, which shows that contrary to Theorem \ref{4.2}, the Rees algebra $\calR (I)$ is not an almost Gorenstein graded ring,  if $n < \alpha + \beta$, $n + \alpha < 2\beta$, and $\beta < 2 \alpha$.

\begin{proof}[Proof of Theorem \ref{1.4}] 
Suppose that $\calR = \calR(I)$ is an almost Gorenstein graded ring and choose an exact sequence
$$
0 \to \calR \overset{\varphi}{\longrightarrow} \rmK_{\calR}(1) \to C \to 0\ \ \ \ \ \ (E)
$$
of graded $\calR$-modules such that either $C = (0)$ or $C \ne (0)$ and $C_{\fkM}$ is an Ulrich $\calR_{\fkM}$-module, where $\fkM = \m \calR + \calR_+$. Because $\rmK_{\calR}(1)= J \calR$ (Proposition \ref{2.4a}) and $\mu_R(J) = 3$, $\calR$ is not a Gorenstein ring, whence $C \ne (0)$. We set $h=\varphi (1) \in J$. Then $h \not\in \fkM \rmK_{\calR}$ (\cite[Corollary 3.10]{GTT}) so that $\mu_{\calR}(C) = 2$. Therefore  $\mu_{\calR}(\fkM C) \le 4$, because $\fkM C_{\fkM} = (\xi, \eta) C_{\fkM}$  for some  $\xi, \eta \in \fkM$ (\cite[Proposition 2.2 (2)]{GTT}).  Let  $X = \fkM J / [\fkM h + \fkM^2 J] ~(\cong \fkM C)$ and notice that $X$ is generated by elements of degrees $0$ and $1$. Then because
$$
X_0 \cong \m J/ [\m h + \m^2 J] ~~\ \text{and}~~\ X_1 \cong IJ/ [I h + \m I J]
$$
(here $X_i$ denotes the homogeneous component of degree $i$), we have
$$
\mu_R(\m J/ \m h) + \mu_R(IJ / I h)   = \mu_{\calR}(X) =\mu_{\calR}(\fkM C)\le 4,
$$
while we get $\mu_R(\m J/ \m h) \ge 2$ and $\mu_R(IJ / I h) \ge 2$, because $\mu_R(\m J) =4$ and $\mu_R(IJ) = 6$. Hence $\mu_R(IJ / I h) = 2$. Let us write $h= a x^2 + b xy^{n-\beta} + c y^{n-\alpha}$ with $a, b, c \in R$ (remember that $h \in J = (x^2, xy^{n-\beta}, y^{n-\alpha}$)).  We set $
V = [I h + \m IJ] / \m IJ 
$. Then it is direct to check that the $R/\m$-space $V$ 
is spanned by the images of the following four elements in $Ih$ 
$$
ax^5 + b x^4y^{n-\beta} + c x^3 y^{n-\alpha},~ c x^2y^n, ~b x^2y^n, \quad\text{and} \quad a x^2y^n + b xy^{2n-\beta} + c y^{2n-\alpha}.
$$ 
 However because $\mu_R(IJ / I h) = 2$ and $\mu_R(IJ) = 6$, we must have $\ell_R(V) = 4$, which is clearly impossible. Thus $\calR$ is not an almost Gorenstein graded ring.
\end{proof}


\begin{remark}
There are contracted ideals $I$ with order greater than $3$, whose Rees algebras $\calR(I)$ are not almost Gorenstein graded rings. For example, let $m \ge 4$ be an integer and set 
\begin{center}
$Q=(x^m, y^{2m})$ \ and \ $I=Q+(x^{m-i}y^{2i+1} \mid 1 \le i \le m-1)$.
\end{center}
Then $I^2=QI$ and $I$ is a contracted ideal with $\mathrm{o}(I) = m$. One can show similarly as Theorem \ref{1.4} that $\calR(I)$ is not an almost Gorenstein graded ring.
\end{remark}

\section{Analysis of certain non-contracted ideals}
Let $(R, \m)$ be a two-dimensional regular local ring with infinite residue class field. Let $x,y$ be a regular system of parameters of $R$. We close this paper with the analysis of the following ideal $I$. Notice that $\rmo(I)=3$ but $I$ is not a contracted ideal, if $n \ge 3$.

\begin{thm}\label{5.1}
Choose integers $\alpha, n$ so that $0 < \alpha < n$, $2 \alpha \ge n$ and set $I=(x^3, x^2y^{\alpha}, y^n)$, $Q=(x^3, y^n)$, and  $J = Q:I$. Then $I^2 = QI$ and $J = (x, y^{n-\alpha})$. We furthermore have the following.
\begin{enumerate}[$(1)$]
\item If $2 \alpha =n$, then $\calR(I)$ is an almost Gorenstein graded ring.
\item If $2 \alpha >n$, then $\calR(I)$ is not an almost Gorenstein graded ring.\end{enumerate}
\end{thm}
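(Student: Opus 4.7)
The preliminary assertions are direct computations. Since $x^3,y^n\in Q$, we have $J=Q:I=Q:(x^2y^\alpha)=(x,y^{n-\alpha})$, and the containment $(x^2y^\alpha)^2=x^3(xy^{2\alpha-n})y^n\in QI$ (valid because $2\alpha\ge n$) shows $I^2=QI$. For (1), where $2\alpha=n$, I would apply Theorem \ref{2.5} with $f=y\in\m$, $g=y^n\in I$, and $h=x\in J=(x,y^\alpha)$. Direct monomial arithmetic yields
\[
yJ+\m x=(xy,y^{\alpha+1})+(x^2,xy)=(x^2,xy,y^{\alpha+1})=\m J
\]
and
\[
y^nJ+Ix=(xy^{2\alpha},y^{3\alpha})+(x^4,x^3y^\alpha,xy^{2\alpha})=(x^4,x^3y^\alpha,xy^{2\alpha},y^{3\alpha})=IJ,
\]
so $\calR(I)$ is almost Gorenstein graded.

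For (2) with $2\alpha>n$, I argue by contradiction, mirroring the proof of Theorem \ref{1.4}. Suppose $\calR=\calR(I)$ is almost Gorenstein graded and choose an exact sequence $0\to\calR\overset{\varphi}{\to}\rmK_\calR(1)\to C\to 0$ with $C_\fkM$ an Ulrich $\calR_\fkM$-module. Since $\mu_R(J)=2$, the ring $\calR$ is not Gorenstein and $C\ne 0$; from $\rmK_\calR(1)=J\calR$ (Proposition \ref{2.4a}) and $h=\varphi(1)\not\in\fkM\rmK_\calR$ (equivalently $h\not\in\m J$) I obtain $\mu_\calR(C)=\mu_R(J)-1=1$, whence the Ulrich condition gives $\mu_\calR(\fkM C)\le 2\mu_\calR(C)=2$. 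Exactly as in Theorem \ref{1.4}, setting $X=\fkM C$ one finds $X_i=0$ for $i\ge 2$,
\[
X_0\cong\m J/(\m h+\m^2J),\qquad X_1\cong IJ/(Ih+\m IJ);
\]
the identity $I\cdot\m J=\m\cdot IJ$ forces $IX_0=0$ inside $X_1$, so $\mu_\calR(\fkM C)=\mu_R(X_0)+\mu_R(X_1)$.

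To conclude, I show $\mu_R(X_0)+\mu_R(X_1)\ge 3$, contradicting the Ulrich bound. Monomial-ideal computations give $\m J=(x^2,xy,y^{n-\alpha+1})$ with $\mu_R(\m J)=3$ and, using $2\alpha>n$ to drop $x^3y^\alpha=y^{2\alpha-n}(x^3y^{n-\alpha})$ and $x^2y^n=x(xy^n)$ as non-minimal, $IJ=(x^4,x^3y^{n-\alpha},xy^n,y^{2n-\alpha})$ with $\mu_R(IJ)=4$. Writing $h=ax+by^{n-\alpha}$ with $a,b\in R$, the submodule $\m h$ is generated by the two elements $xh,yh$, contributing at most two classes to the image in $\m J/\m^2J$, so $\mu_R(X_0)\ge 3-2=1$. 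The decisive point for $X_1$ is that the middle generator $x^2y^\alpha h=ax^3y^\alpha+bx^2y^n$ of $Ih$ already lies in $\m IJ$ (precisely because $2\alpha>n$ places $x^3y^\alpha$ into $\m\cdot x^3y^{n-\alpha}$), leaving only $x^3h$ and $y^nh$ to contribute, whence $\mu_R(X_1)\ge 4-2=2$. Therefore $\mu_\calR(\fkM C)\ge 3>2$, the required contradiction. The main obstacle is precisely this minimal-generator count of $IJ$ together with the vanishing $x^2y^\alpha h\in\m IJ$; both steps hinge on $2\alpha>n$, which is why the argument collapses in the borderline case (1).
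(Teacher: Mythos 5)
Your proposal is correct and follows essentially the same route as the paper: part (1) verifies the two equalities of Theorem \ref{2.5} with the very same choice $f=y$, $g=y^n$, $h=x$, and part (2) reproduces the paper's contradiction argument via $\mu_{\calR}(C)=1$, the Ulrich bound $\mu_{\calR}(\fkM C)\le 2$, and the observation that $x^3y^{\alpha},x^2y^n\in\m IJ$ when $2\alpha>n$. The only difference is organizational (you bound $\mu_R(X_0)+\mu_R(X_1)\ge 3$ directly, whereas the paper first pins down $\mu_R(IJ/Ih)=\mu_R(\m J/\m h)=1$ and then contradicts $\ell_R(V)=3$), which is the same computation.
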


\begin{proof}
$(1)$ We have $\m J = \m x + y J$ and $IJ = Ix +y^n J$. Hence by Theorem \ref{2.5} $\calR(I)$ is an almost Gorenstein graded ring.

$(2)$ Suppose that $\calR = \calR(I)$ is an almost Gorenstein graded ring and choose the exact sequence
$$
0 \to \calR \overset{\varphi}{\longrightarrow} \rmK_{\calR}(1) \to C \to 0
$$
of graded $\calR$-modules such that either $C = (0)$ or $C \ne (0)$ and $C_{\fkM}$ is an Ulrich $\calR_{\fkM}$-module, where $\fkM = \m \calR + \calR_+$. 
We actually have $\mu_{\calR}(C)=1$, since $\rmK_{\calR}(1) = J\calR =(x, y^{n-\alpha}) \calR$ and $\varphi(1) \not\in \fkM \rmK_{\calR}$ by \cite[Corollary 3.10]{GTT}. Therefore  $\mu_{\calR}(\fkM C) \le2$ by \cite[Proposition 2.2 (2)]{GTT}. 
On the other hand, $\mu_R(IJ) = 4$ and $\mu_R(\m J) = 3$, since
$$
IJ = (x^4, x^3y^{n-\alpha}, xy^n, y^{2n-\alpha})~~\ \ \text{and}\ \ \ ~~\m J = (x^2, xy, y^{n-\alpha + 1}).
$$
Consequently for the same reason as in the proof of Theorem \ref{1.4} we get $\mu_R(IJ/I h) = \mu_R(\m J/\m h) = 1$. Hence $\ell_R([Ih + \m IJ]/\m IJ) = 3$. Nevertheless, writing $h=ax + by^{n-\alpha}$ with $a, b\in R$, we see that the $R/\m$-space $V = [Ih + \m IJ]/\m IJ$ is spanned by the images of the following elements in $Ih$
$$ax^4 + bx^3y^{n-\alpha}, \ \ ax^3y^{\alpha} + bx^2y^n, \ \ \text{and}\ \ axy^n + b y^{2n-\alpha},
$$
so that the dimension of $V$ is at most two, because $x^3y^{\alpha}, x^2y^n \in \m IJ$ (remember that $2 \alpha > n$). This is a contradiction. 
\end{proof}


To end this paper let us note a question. 
Let $(R,\m)$ be a two-dimensional regular local ring with infinite residue class field. Let $\{I_i\}_{1 \le i \le \ell}$ be a finite family of $\m$-primary ideals of $R$. With this notation we pose the following question, which is wildly open at this moment.

\begin{ques}

When is $\calR (I_1I_2 \ldots I_\ell)$ an almost Gorenstein graded ring?

\end{ques}


\end{document}